\numberwithin{equation}{section}
\newtheorem{lemma}[equation]{Lemma}
\newtheorem{thm}[equation]{Theorem}
\newtheorem{cor}[equation]{Corollary}
\newtheorem{prop}[equation]{Proposition}
\theoremstyle{remark}
\newtheorem{remark}[equation]{Remark}
\newtheorem*{acknowledgments}{Acknowledgments}
\renewcommand{\bar}[1]{#1\llap{$\overline{\phantom{\rm#1}}$}}
\newcommand{\Aberk}[1]{\ensuremath{\bA_{\mathrm{Berk},#1}^1}}
\newcommand{\lra}{\longrightarrow}
\DeclareMathOperator{\tor}{{tor}}
\DeclareMathOperator{\End}{{End}}
\DeclareMathOperator{\hhat}{{\widehat{h}}}
\DeclareMathOperator{\supp}{{Supp}}
\DeclareMathOperator{\gal}{{Gal}}
\newcommand{\bA}{{\mathbb A}}
\newcommand{\N}{{\mathbb N}}
\newcommand{\Z}{{\mathbb Z}}
\newcommand{\R}{{\mathbb R}}
\newcommand{\C}{{\mathbb C}}
\newcommand{\E}{{\mathbb E}}
\newcommand{\M}{{\mathbb M}}
\newcommand{\Fp}{{\mathbb{F}_p}}
\newcommand{\Fq}{{\mathbb{F}_q}}
\newcommand{\Fqs}{{\mathbb{F}_{q^s}}}
\newcommand{\Fpbar}{{\bar{\Fp}}}
\newcommand{\Fqbar}{{\bar{\Fq}}}
\newcommand{\Kbar}{{\bar{K}}}
\newcommand{\bG}{{\mathbb G}}
\newcommand{\Gal}{{\rm Gal}}
\newcommand{\cV}{\mathcal{V}}
\newcommand{\into}{\hookrightarrow}
\newcommand{\cD}{\mathcal{D}}
\renewcommand{\l}{\lambda}
\renewcommand{\a}{\alpha}
\newcommand{\g}{\gamma}
\renewcommand{\d}{\delta}
\newcommand{\e}{\epsilon}
\renewcommand{\th}{\theta}
\newcommand{\bfa}{{\mathbf a}}
\newcommand{\bfb}{{\mathbf b}}
\newcommand{\bfc}{{\mathbf c}}
\newcommand{\bff}{{\mathbf f}}
\newcommand{\bP}{{\mathbb P}}
\newcommand{\helpme}[1]{{\sf $\heartsuit\heartsuit$ Meta-remark: [#1]}}
\newcommand{\fixme}[1]{{\sf $\spadesuit\spadesuit$ Meta-remark: [#1]}}
\newif\ifpdf
\begin{document}

%#######################################################################
%#######################################################################

% Declarations for Front Matter

\title{Torsion points in families of Drinfeld modules}

\author{D.~Ghioca}
\address{
Dragos Ghioca\\
Department of Mathematics\\
University of British Columbia\\
Vancouver, BC V6T 1Z2\\
Canada
}
\email{dghioca@math.ubc.ca}

\author{L.-C.~Hsia}
\address{
Liang-Chung Hsia\\
Department of Mathematics\\ 
National Taiwan Normal  University\\
Taiwan, ROC
}
\email{hsia@math.ntnu.edu.tw}

\keywords {Drinfeld modules; torsion points}
\subjclass[2010]{Primary 37P05; Secondary 37P10}
\thanks{The first author was partially supported by an NSERC Discovery Grant. The second
  author was partially supported by the National Center of Theoretical Sciences of Taiwan and 
 NSC Grant  99-2115-M-003-012-MY3.}

%#######################################################################
%#######################################################################

 \begin{abstract}
 Let $\Phi^\l$ be an algebraic family of Drinfeld modules defined over a field $K$ of characteristic $p$, and let $\bfa,\bfb\in K[\l]$. Assume that neither $\bfa(\l)$ nor $\bfb(\l)$ is a torsion point for $\Phi^\l$ for all $\l$. If there exist infinitely many $\l\in\Kbar$ such that both $\bfa(\l)$ and $\bfb(\l)$ are torsion points for $\Phi^\l$, then we show that for each $\l\in\Kbar$, we have that $\bfa(\l)$ is torsion for $\Phi^\l$ if and only if $\bfb(\l)$ is torsion for $\Phi^\l$. In the case $\bfa,\bfb\in K$, then we prove in addition that $\bfa$ and $\bfb$ must be $\Fpbar$-linearly dependent.
 \end{abstract}

\date{\today}

\maketitle

%#######################################################################
\section{Introduction}
\label{intro}
%#######################################################################

Lang \cite{Lang-int} proved that if a curve $C\subset \bA^2$ contains infinitely many points whose coordinates are both roots of unity, then $C$ is the zero set of an equation of the form $X^mY^n=\alpha$, for some $m,n\in\Z$ (not both equal to $0$) and some root of unity $\alpha$. In this case we note that if $C$ projects dominantly on both axis, then for \emph{each} point $(x,y)\in C$ we know that $x$ is a root of unity \emph{if and only if} $y$ is a root of unity. In particular, the following result is a corollary of Lang's Theorem.
\begin{cor}
\label{corollary Lang}
Let $F_1,F_2\in \C(z)$ be nonconstant rational maps such that there exist infinitely many $\l\in\C$ such that both $F_1(\l)$ and $F_2(\l)$ are roots of unity. Then $F_1$ and $F_2$ are multiplicatively dependent, and therefore for each $\l\in\C$, $F_1(\l)$ is a root of unity if and only if $F_2(\l)$ is a root of unity. 
\end{cor}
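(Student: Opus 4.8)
The plan is to recognize the hypothesis as saying that a plane curve built from $F_1$ and $F_2$ carries infinitely many points whose two coordinates are both roots of unity, and then to quote Lang's Theorem essentially verbatim; indeed the statement is an almost immediate corollary, so the only real work is bookkeeping.

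First I would introduce the curve $C \subset \bA^2$ defined as the Zariski closure of the image of the map $z \mapsto (F_1(z), F_2(z))$. Since $F_1$ and $F_2$ are nonconstant, this image is irreducible and of dimension exactly one: at least one because its first coordinate already sweeps out a dense subset of $\bA^1$, and at most one because it is the image of a curve. Thus $C$ is an irreducible affine plane curve, each of its two coordinate projections to $\bA^1$ is dominant, and the parametrizing map $z \mapsto (F_1(z), F_2(z))$ has finite fibers (a fiber over $(a,b)$ is contained in the finite zero set of $F_1 - a$).

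Next, for each of the infinitely many $\l \in \C$ supplied by the hypothesis, the values $F_1(\l)$ and $F_2(\l)$ are finite (being roots of unity, $\l$ is a pole of neither), so $(F_1(\l), F_2(\l))$ is a point of $C$ whose two coordinates are both roots of unity; by the finite-fiber property these points are infinitely many. Lang's Theorem then forces $C$ to be the zero set of an equation $X^m Y^n = \a$ with $(m,n) \in \Z^2 \setminus \{(0,0)\}$ and $\a$ a root of unity. Since $C$ projects dominantly onto each axis, neither $m$ nor $n$ can be zero (an equation $X^m = \a$ or $Y^n = \a$ would force one coordinate to take only finitely many values on $C$). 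Pulling back along the parametrization yields the identity $F_1(z)^m F_2(z)^n = \a$ in $\C(z)^*$ with $m, n \neq 0$, which exhibits $F_1$ and $F_2$ as multiplicatively dependent.

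For the equivalence, fix $\l \in \C$ and suppose $F_1(\l)$ is a root of unity; then it is finite and nonzero, so from $F_1(\l)^m F_2(\l)^n = \a$ we get that $F_2(\l)^n = \a\, F_1(\l)^{-m}$ is a finite nonzero root of unity. Since $n \neq 0$, this forces $F_2(\l)$ itself to be finite, nonzero, and a root of unity; the reverse implication follows by symmetry. I do not expect a genuine difficulty anywhere: all the substance sits inside Lang's Theorem, and the only points that need care are checking that the parametrized image really is an irreducible curve carrying infinitely many of the relevant torsion points, and tracking zeros and poles in the last step so that the multiplicative identity transfers cleanly to the stated equivalence.
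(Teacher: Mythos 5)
Your proposal is correct and follows exactly the route the paper sketches in the paragraph preceding the corollary: form the curve parametrized by $(F_1,F_2)$, invoke Lang's theorem to get $X^m Y^n=\alpha$, use dominance of both coordinate projections to force $m,n\neq 0$, and then read off the biconditional from the resulting multiplicative relation. Your extra bookkeeping (irreducibility, finite fibers, and the zero/pole analysis at the end) just fills in details the paper leaves implicit.
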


Lang's result is a special case of the Manin-Mumford Conjecture  (proven by Raynaud \cite{Ray1, Ray2} for abelian varieties, and by Hindry \cite{Hindry} for semiabelian varieties). The Manin-Mumford Conjecture (in its most general form asked by Lang) predicts that the set of
torsion points of a semiabelian variety $G$ defined over $\C$ is not
Zariski dense in a subvariety $V$ of $G$, unless $V$ is a translate of
an algebraic subgroup of $G$ by a torsion point.  Pink and Zilber have
suggested extending the Manin-Mumford conjecture to a more general question
regarding unlikely intersections between a subvariety $V$ of a
semiabelian scheme $G$ and algebraic subgroups of the fibers of $G$ having
codimension greater than the dimension of $V$ (see \cite{BMZ,
  Habegger, M-Z-1, M-Z-2, Pink}). 

Masser and Zannier~ \cite{M-Z-1, M-Z-2} study the Pink-Zilber Conjecture when $G$ is the square of
the Legendre family of elliptic curves. We state below  a special case of their theorem.
\begin{thm}[Masser-Zannier] 
For each $\l\in\C\setminus\{0,1\}$, let $E_\l$ be the elliptic curve
given by the equation $Y^2=X(X-1)(X-\l)$. Let $P_\l$ and $Q_\l  $
% $\bfa,\bfb\in\C(\l)$ and let
%:=\left(\bfa(\l),\sqrt{\bfa(\l)(\bfa(\l)-1)(\bfa(\l)-\l)}\right)$
%:=\left(\bfb(\l),\sqrt{\bfb(\l)(\bfb(\l)-1)(\bfb(\l)-\l)}\right)$
be two given families of points on $E_\l$ depending algebraically on the parameter $\l$.
%be the corresponding points on $E_\l$.
Suppose there exist infinitely
many $\l\in\C$ such that both $P_\l$ and $Q_\l$ are torsion points for
$E_\l$. Then the points $P_\l$ and $Q_\l$  are linearly dependent over
$\Z$ on the generic fiber of the elliptic surface $E_\l$.
% In particular, at least one of the following three properties holds:
% \begin{enumerate}
% \item $P_\l$ is torsion for \emph{all} $\l$.
% \item $Q_\l$ is torsion for \emph{all} $\l$.
% \item for each $\l$, we have that $P_\l$ is a torsion point if and only if $Q_\l$ is a torsion point.
% \end{enumerate}
\end{thm}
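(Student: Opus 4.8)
\medskip
\noindent\textbf{Proof proposal.} I would prove this by the Pila--Zannier method for problems of unlikely intersection. First I would make some standard reductions. Since $E_\l$ and the sections $P_\l,Q_\l$ are defined over a finitely generated field, a spreading-out/specialization argument reduces to the case where all the data are defined over a number field $k$; then for each $N\in\N$ the condition $[N]P_\l=O$ is algebraic in $\l$ over $k$, so every $\l\in\C$ for which $P_\l$ or $Q_\l$ is torsion automatically lies in $\Qbar$. If $P$ or $Q$ is already torsion on the generic fiber the conclusion is immediate, so I assume both are non-torsion; then, aiming for a contradiction, I suppose that $P$ and $Q$ are $\Z$-linearly independent on the generic fiber while $\Sigma:=\{\l\in\Qbar\col P_\l\text{ and }Q_\l\text{ are both torsion}\}$ is infinite.

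Next I would establish a height bound together with a Galois lower bound. By Silverman's specialization theorem for the non-isotrivial elliptic surface $E_\l/\bP^1$, one has $\hhat_{E_\l}(P_\l)=\hhat_{E/k(\l)}(P)\cdot h(\l)+o(h(\l))$ as $h(\l)\to\infty$, with $\hhat_{E/k(\l)}(P)>0$ because $P$ is non-torsion on the generic fiber; since $\hhat_{E_\l}(P_\l)=0$ for every torsion value, this forces $h(\l)\le C$ for an absolute constant $C$ and all $\l\in\Sigma$. On the other hand, for $\l\in\Sigma$ all $\gal(\Qbar/k)$-conjugates of $\l$ again lie in $\Sigma$ and carry torsion points of the same orders $N_0=\ord(P_\l)$, $M_0=\ord(Q_\l)$; writing $N=\max(N_0,M_0)$ and $L=\lcm(N_0,M_0)\le N^2$, the standard lower bounds for fields of definition of torsion points of elliptic curves (over $k(j(E_\l))$, with the thin CM locus handled separately) give $[k(\l):k]\gg N^\delta$ for an absolute $\delta>0$. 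Thus each $\l\in\Sigma$ of large order has $\gg N^\delta$ conjugates, all of height $\le C$, on each of which $P$ and $Q$ are torsion of orders dividing $L$.

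For the counting step I would pass to the Betti coordinates. Fixing a suitable simply connected open $U_0\subset\bP^1(\C)\setminus\{0,1,\infty\}$, the real-period coordinates $\beta_P,\beta_Q\col U_0\to(\R/\Z)^2$ of $P_\l$ and $Q_\l$ are real-analytic, and $P_\l$ is torsion of order dividing $N$ exactly when $\beta_P(\l)\in(\tfrac1N\Z/\Z)^2$ (similarly for $Q$). Then $Z:=(\beta_P,\beta_Q)(U_0)\subset(\R/\Z)^4$ is a surface definable in $\R_{\mathrm{an}}$, and since $P,Q$ are non-torsion the map $\l\mapsto(\beta_P(\l),\beta_Q(\l))$ has finite fibers off a lower-dimensional set, so (after a pigeonhole covering the bounded-height locus by finitely many such $U_0$) the conjugates of a $\l\in\Sigma$ with $N$ large yield $\gg N^\delta$ distinct rational points of $Z$ of height $\le L\le N^2$. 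By the Pila--Wilkie counting theorem the number of rational points of $Z$ of height $\le T$ lying off its algebraic part is $O_\e(T^\e)$ for every $\e>0$; choosing $\e<\delta/2$, for $N$ large this is negligible compared to $N^\delta$, and therefore $Z$ contains a connected positive-dimensional semialgebraic set.

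It then remains to convert the existence of such a semialgebraic set into an algebraic relation, and this is the step I expect to be the main obstacle. What is needed is a functional-transcendence statement of Ax--Lindemann type for the universal elliptic curve: the image of the Betti map $(\beta_P,\beta_Q)$ contains \emph{no} positive-dimensional semialgebraic subset unless $P$ and $Q$ are $\Z$-linearly dependent on the generic fiber. For the Legendre family this should be obtainable by hand, using the explicit hypergeometric/Picard--Fuchs description of the periods and the Zariski-density of the monodromy group in $\mathrm{SL}_2$: a semialgebraic arc inside $Z$ forces a nontrivial linear relation among the period functions attached to $P$ and $Q$, hence $aP+bQ=O$ on the generic fiber for some nonzero $(a,b)\in\Z^2$, which is the desired dependence --- contradicting our assumption. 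By contrast the first three steps are comparatively soft (specialization of heights, an elementary Galois estimate, and a black-box application of Pila--Wilkie); the particular geometry of the Legendre surface enters decisively only in this last step, which is the heart of the Masser--Zannier argument.
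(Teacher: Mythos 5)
The paper does not give a proof of this theorem: it is quoted as background from Masser and Zannier, with the proof residing in their papers \cite{M-Z-1, M-Z-2}. Your proposal is therefore being judged against that source, and it is a faithful, essentially correct outline of the Pila--Zannier strategy those papers use: height-bounded specialization via Silverman's theorem, Galois lower bounds for torsion coming from isogeny estimates (with CM fibers handled separately), o-minimal counting via Pila--Wilkie on the Betti image, and an Ax--Lindemann-type functional transcendence statement for elliptic periods to convert the resulting semialgebraic arc into a $\Z$-linear relation $aP+bQ=O$. One small imprecision: to transfer the $\gg N^{\delta}$ bound on the number of conjugates of $\l$ into a lower bound on the number of distinct rational points of $Z$, it is not enough that the Betti map is generically finite; one needs uniformly bounded fibers on the bounded-height region, which holds but deserves a word. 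You also correctly identify the functional-transcendence input as the genuinely hard step.

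It is worth emphasizing, since it is the point of this paper, that the authors' proof of the Drinfeld-module analogue (Theorem~\ref{main result}) goes by an entirely different route: the Baker--DeMarco equidistribution method. They show the generalized $v$-adic Mandelbrot sets $M_{\bfa,v}$, $M_{\bfb,v}$ associated to the two marked points have adelic capacity $1$ (Theorem~\ref{mandelbrot capacity}, Corollary~\ref{adelic mandelbrot}), feed the infinitely many common torsion parameters into the Baker--Rumely equidistribution theorem (Theorem~\ref{thm:equidistribution}) to force $M_{\bfa,v}=M_{\bfb,v}$ at every place, and read off the trichotomy from equality of the associated adelic heights $h_{\M_{\bfa}}=h_{\M_{\bfb}}$. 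That route avoids o-minimality and Pila--Wilkie altogether, which is essential here since the Betti-coordinate and period machinery your proposal relies on has no counterpart in positive characteristic; the trade-off is a weaker conclusion (the ``torsion if and only if torsion'' statement plus a relation between leading coefficients) rather than the explicit $\Z$-linear dependence on the generic fiber that Masser--Zannier achieve in the elliptic case.
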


Both Lang's result and Masser-Zannier's result are special cases of the following problem. Let $\{G_\l\}$ be an algebraic family of algebraic groups, and let $\{P_\l\}$ and $\{Q_\l\}$ be two algebraic families of points on $G_\l$. If there exist infinitely many $\l$ such that both $P_\l$ and $Q_\l$ are torsion points, then at least one of the following three properties holds:
\begin{enumerate}
\item $P_\l$ is torsion for all $\l$.
\item $Q_\l$ is torsion for all $\l$.
\item for each $\l$, $P_\l$ is a torsion point if and only if
  $Q_\l$ is a torsion point. 
\end{enumerate}

In the arithmetic theory of function fields of positive characteristic, {\em
  Drinfeld modules} play a  role similar to elliptic curves over 
number fields. Hence, it is natural to ask if
there exist results for family of Drinfeld mdoules that are parallel
to those obtained by Masser and Zannier. 
In this paper we study the first instance of the above problem in
characteristic $p$, where $G_\l:=(\bG_a,\Phi^\l)$ is the constant
family of additive group schemes endowed with the action of a
Drinfeld module $\Phi^\l$ (which belongs to an algebraic family of
such Drinfeld modules). More precisely, let $K$ be a field extension
of $\Fq(t)$ and let $K(z)$ be the rational function field in
variable $z.$ Let $\Phi : \Fq[t] \to \End_{K(z)}(\bG_a)$ be  a Drinfeld
module defined over $K(z)$ (See Section~\ref{statement of results} for
details). Equivalently, $\Phi$ is also regarded as a family of Drinfeld
modules by specialization. That is, by letting $z=\l$ for $\l\in
\Kbar$ we obtain an algebraic family of Drinfeld modules, denoted by
$\Phi^\l$ which are defined over $\Kbar$. Then we are able to prove 
that the above trichotomy must hold. The following result is a special
case of our more general result (Theorem~\ref{main result}). 
%(for more details see Section~\ref{statement of results}). 
   
\begin{thm}
\label{nice main result}
Let $r\ge 2$ be an integer, let $q$ be a power of a prime number $p$,
let $K$ be a field extension of $\Fq(t)$, and let $g_1,\dots,g_{r-1}\in K[z]$. We
let $\Phi:\Fq[t]\lra \End_{K(z)}(\bG_a)  $ be the family of Drinfeld modules defined by
$$\Phi_t(x)=tx+\sum_{i=1}^{r-1}g_i(z) x^{q^i} +x^{q^r}.$$
Let $\bfa,\bfb\in K[z]$ and assume there exist infinitely many $\l\in\Kbar$ such that both $\bfa(\l)$ and $\bfb(\l)$ are torsion points for $\Phi^\l$. Then at least one of the following properties holds:
\begin{enumerate} 
\item %for each $\l\in\Kbar$, we have that 
$\bfa$ is a torsion point for $\Phi$.
\item %for each $\l\in\Kbar$, we have that 
$\bfb$ is a torsion point for $\Phi$.
\item for each $\l\in\Kbar$, $\bfa(\l)$ is a torsion point for $\Phi^\l$ if and only if $\bfb(\l)$ is a torsion point for $\Phi^\l$.
\end{enumerate}
\end{thm}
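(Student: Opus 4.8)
The plan is to reduce the statement to an archimedean equidistribution / height argument, following the strategy introduced by Masser--Zannier but carried out with the tools available in the Drinfeld module setting (Taguchi, Breuer, and the canonical height machinery for Drinfeld modules over function fields). Assume we are in the interesting case where neither $\bfa$ nor $\bfb$ is identically torsion for $\Phi$; we must then show that the third alternative holds. Associated to the Drinfeld module $\Phi$ over $K(z)$ there is a canonical height $\hhat_\Phi$ on $\overline{K(z)}$, and for each place $v$ of $\overline{K(z)}$ a local canonical height $\hhat_{\Phi,v}$, so that $\hhat_\Phi(P)=\sum_v n_v \hhat_{\Phi,v}(P)$. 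The point $\bfa(z)$, viewed as an element of $K(z)$, has a well-defined global canonical height $\hhat_\Phi(\bfa)$; the key observation is that $\bfa$ is torsion for $\Phi$ (over $K(z)$) precisely when $\hhat_\Phi(\bfa)=0$, and since we are assuming this is \emph{not} the case we have $\hhat_\Phi(\bfa)>0$, and likewise $\hhat_\Phi(\bfb)>0$.

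Next I would pass to the ``measure'' or ``divisor'' incarnation of these heights. For each place $w$ of $K$ (equivalently, after base change, each archimedean or non-archimedean place in an appropriate completion), the function $\l\mapsto \hhat_{\Phi^\l,w}(\bfa(\l))$ extends to a continuous, non-negative, subharmonic-type function on the relevant Berkovich analytic space of the $\l$-line, whose Laplacian is a positive measure $\mu_{\bfa,w}$ of total mass equal to the ``degree'' contribution of $\bfa$, and $\hhat_\Phi(\bfa)=\sum_w \int \hhat_{\Phi^\l,w}(\bfa(\l))\,$(something)$\ \ne 0$ forces $\mu_{\bfa,w}\ne 0$ for at least one $w$. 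The hypothesis that there are infinitely many $\l\in\Kbar$ with both $\bfa(\l)$ and $\bfb(\l)$ torsion provides an infinite sequence of parameters $\l_n$ that are simultaneously ``small'' for the two height functions $\hhat_{\Phi^\l}(\bfa(\l))$ and $\hhat_{\Phi^\l}(\bfb(\l))$ (indeed these are literally zero at $\l=\l_n$). Applying an equidistribution theorem (the function-field / Berkovich analogue of Baker--Rumely, Chambert-Loir, Favre--Rivera-Letelier, as adapted to Drinfeld modules) to each sequence separately, the Galois orbits of the $\l_n$ equidistribute both with respect to $\mu_{\bfa,w}$ and with respect to $\mu_{\bfb,w}$ for every place $w$; by uniqueness of the equidistribution limit we conclude $\mu_{\bfa,w}=c_w\,\mu_{\bfb,w}$ for suitable constants, and in fact that the two families of measures coincide up to scaling at every place.

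From the equality of all the local measures I would extract the conclusion that the set of $\l\in\Kbar$ for which $\bfa(\l)$ is torsion coincides with the set for which $\bfb(\l)$ is torsion. The point is that $\bfa(\l)$ is torsion for $\Phi^\l$ if and only if $\hhat_{\Phi^\l}(\bfa(\l))=0$, and the latter quantity is $\sum_w (\text{local term at }w)$ with every term non-negative; since each local term is a potential for $\mu_{\bfa,w}$, and the $\mu_{\bfa,w}$ agree (up to scaling) with the $\mu_{\bfb,w}$, the locus where the $\bfa$-potential vanishes is forced to equal the locus where the $\bfb$-potential vanishes. Concretely one shows that $\hhat_{\Phi^\l}(\bfa(\l))$ and $\hhat_{\Phi^\l}(\bfb(\l))$, as functions of $\l$, have the same ``unramified'' behaviour and the same zero set. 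This gives property (3). Finally, in the special case $\bfa,\bfb\in K$ (constant in $z$), one has extra rigidity: the family of measures degenerates and the equality $\mu_{\bfa,w}=c_w\mu_{\bfb,w}$ becomes an honest relation between the torsion specialization loci of two fixed points, which by a direct analysis of the isotrivial part of $\Phi$ (using that over $K$ the relevant Drinfeld module has a large ring of endomorphisms / descends to a form defined over $\Fpbar$) forces $\bfa$ and $\bfb$ to be $\Fpbar$-linearly dependent.

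The main obstacle I anticipate is establishing the equidistribution input in the precise form needed here: one must show that the local canonical height functions attached to $\bfa$ and $\bfb$ on the Berkovich $\l$-line are continuous (not merely bounded), that the associated measures are genuinely positive with the right total mass, and that the sequence $\l_n$ has height tending to zero with respect to \emph{both} adelic metrics simultaneously, so that the equidistribution theorem applies to produce a single limiting measure that must equal both $\mu_{\bfa,w}$-normalizations and both $\mu_{\bfb,w}$-normalizations. Controlling the places of bad reduction of $\Phi$ (where the local height need not be harmonic away from an obvious divisor) and ruling out the degenerate possibility that one of the measures is identically zero — which would send us back into case (1) or (2) — are the delicate technical points; handling the characteristic-$p$ subtleties (inseparability, the role of the Frobenius, the failure of some archimedean estimates) is where the argument will differ most from the Masser--Zannier original.
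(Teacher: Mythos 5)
Your proposal correctly identifies the overall engine of the argument: pass to canonical heights, package the local heights $\l\mapsto\hhat_{\Phi^\l,v}(\bfa(\l))$ as potentials (or Green's functions) on the Berkovich parameter line, feed the sequence of parameters where both points are torsion into a Baker--Rumely-type equidistribution theorem, conclude equality of the limiting measures, and then read off that the two torsion loci in $\l$ coincide. That is indeed the method of the paper. However, there is a genuine gap in how you bridge the technical hypothesis and the statement to be proved. The paper's main technical theorem (what it calls Theorem \ref{main result}) is proved under the extra hypothesis
\begin{equation*}
\min\{\deg\bfa,\deg\bfb\}>\max_i\frac{\deg g_i}{q^r-q^i},
\end{equation*}
which is exactly what guarantees that the degree of $\Phi_{t^n}(\bfc)$ in $z$ is $m\,q^{rn}$ (no cancellation of leading terms), that the generalized Mandelbrot set $M_{\bfc,v}$ is compact, and that one can compute $\g(M_{\bfc,v})=|C_m|_v^{-1/m}$ so that the ad\`elic capacity is $1$ and Baker--Rumely applies. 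Your sketch asserts that the local height extends to a continuous subharmonic function ``whose Laplacian is a positive measure of total mass equal to the `degree' contribution of $\bfa$'' and then invokes equidistribution directly, but none of this is automatic: without the degree inequality the boundedness of $M_{\bfc,v}$, the asymptotic $G_{\bfc,v}(\l)=\log|\l|_v+\tfrac1m\log|C_m|_v+o(1)$, and the capacity-one normalization can all fail, and the Baker--Rumely hypothesis $\g(\E)=1$ is then not verified.

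The missing step in your argument is precisely the paper's reduction of Theorem \ref{nice main result} to Theorem \ref{main result}. From the assumption that neither $\bfa$ nor $\bfb$ is torsion for $\Phi$ over $L=K(z)$, the paper invokes the positivity of the local canonical height at some place $w$ of $L/K$ (using \cite[Theorem 4.15]{mw2}, split into the cases where all $g_i$ are constant or some $g_i$ is nonconstant). Integrality of the iterates at every finite place of $K(z)$ then forces $w$ to be the place at infinity, so $\deg_z(\Phi_{t^n}(\bfa))\to\infty$, and likewise for $\bfb$. Replacing $\bfa,\bfb$ by $\Phi_{t^n}(\bfa),\Phi_{t^n}(\bfb)$ for $n$ large (which does not change the torsion loci in $\l$) makes the degree inequality hold, and only then is the equidistribution machine applicable. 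You need to supply this reduction before the argument you outline can get off the ground; the way you phrase the ``degenerate possibility that one of the measures is identically zero'' does not address it, since the issue is not that $\mu_{\bfa,w}$ might vanish but that the objects you want to equidistribute against may not even be well-defined. A smaller inaccuracy: the equidistribution theorem yields equality of the limiting probability measures, hence $\M_{\bfa}=\M_{\bfb}$ and equality of Green's functions, not merely proportionality $\mu_{\bfa,w}=c_w\mu_{\bfb,w}$ ``up to scaling.''
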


For certain families of Drinfeld modules and for constant starting points $\bfa,\bfb$ (i.e., no dependence on $z$) we can give an explicit condition of when there exist infinitely many $\l\in\Kbar$ such that both $\bfa$ and $\bfb$ are torsion points for $\Phi^\l$.

\begin{thm}
\label{nice corollary}
Let $r\ge 2$ be an integer and let $K$ be a field  extension of
$\Fq(t)$ such that $\Fq$ is algebraically closed in $K$. Let
$\bfa,\bfb\in K$ and let $\Phi:\Fq[t]\lra \End_{K(z)}(\bG_a)$ be
the family of Drinfeld modules given by 
$$\Phi_t(x)=tx+z x^q + x^{q^r}.$$
Then there exist infinitely many $\l\in\Kbar$ such that both $\bfa$
and $\bfb$ are torsion points for $\Phi^\l$, if and only if $\bfa$ and
$\bfb$ are linearly dependent over $\Fq$. 
\end{thm}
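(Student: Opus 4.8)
The plan is to deduce this from Theorem~\ref{nice main result} (or rather its general version, Theorem~\ref{main result}) together with an explicit analysis of torsion for the specific family $\Phi_t(x)=tx+zx^q+x^{q^r}$. First I would dispose of the easy direction: if $\bfa$ and $\bfb$ are $\Fq$-linearly dependent, say $c_1\bfa=c_2\bfb$ with $c_i\in\Fq$ not both zero, then because $\Phi$ acts $\Fq$-linearly we have $\Phi_{c_1}(\bfa(\l))=\Phi_{c_2}(\bfb(\l))$ for every $\l$, so $\bfa(\l)$ is torsion for $\Phi^\l$ if and only if $\bfb(\l)$ is; it then suffices to produce infinitely many $\l\in\Kbar$ for which $\bfa$ itself is torsion. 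For this I would show that for each nonzero $P\in\Fq[t]$ the equation $\Phi_P(\bfa)=0$, viewed as a polynomial equation in $z$ over $K$ (note $\Phi_P(\bfa)$ is a polynomial in $z$ once $\bfa\in K$ is fixed, since the coefficients of $\Phi_t$ are polynomials in $z$), has a solution $\l\in\Kbar$ unless $\bfa=0$; ranging over infinitely many coprime $P$ (e.g.\ $P=t-\alpha$ for $\alpha\in\Fqbar$, or the primes of $\Fq[t]$) and checking the solutions are genuinely distinct (different $P$ cannot share a root since that would force $\bfa(\l)$ to be killed by $\gcd$) yields infinitely many such $\l$. The hypothesis that $\Fq$ is algebraically closed in $K$ enters here to guarantee $\bfa\notin\Fqbar\setminus\Fq$ causing degenerate behavior, and to control the constant-field issues in the torsion analysis.

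For the converse — the substantive direction — suppose there exist infinitely many $\l\in\Kbar$ with both $\bfa,\bfb$ torsion for $\Phi^\l$. Apply Theorem~\ref{main result}: at least one of the three conclusions holds. I would first rule out that $\bfa$ (or $\bfb$) is torsion for the generic Drinfeld module $\Phi$ over $K(z)$: since $\bfa\in K$ is a constant (no $z$-dependence) while the coefficient of $x^q$ in $\Phi_t$ is $z$, an explicit computation of $\Phi_P(\bfa)\in K[z]$ shows its leading term in $z$ is nonzero for $\bfa\neq0$ (the $x^q$ term feeds $z$ into every iterate), so $\Phi_P(\bfa)\neq0$ in $K(z)$ for all nonzero $P$; hence $\bfa$ is nontorsion for $\Phi$, and likewise $\bfb$ (the cases $\bfa=0$ or $\bfb=0$ forcing trivial linear dependence are handled separately). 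Therefore conclusion (3) must hold: for every $\l$, $\bfa(\l)$ is torsion iff $\bfb(\l)$ is.

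It remains to upgrade the equivalence ``$\bfa$ torsion $\Leftrightarrow$ $\bfb$ torsion at every $\l$'' to $\Fq$-linear dependence. Here I would use a height or ramification argument, or better, invoke the second assertion of the general Theorem~\ref{main result} (which the excerpt's abstract states: when $\bfa,\bfb\in K$ one gets in addition $\Fpbar$-linear dependence), and then sharpen $\Fpbar$-dependence to $\Fq$-dependence using that $\Fq$ is algebraically closed in $K$: a relation $\sum c_i \sigma^i(\bfa)=\sum d_j\sigma^j(\bfb)$ with coefficients in $\Fpbar$ can, by a Galois-descent argument over $K/\Fq$-conjugation together with the fact that $\bfa,\bfb\in K$ and $\Fq$ is relatively algebraically closed in $K$, be rewritten with coefficients in $\Fq$; then, using that $\Phi$ has coefficients involving $t$ with $\Fq[t]$ the operator ring, the additive/$\Fpbar$-relation collapses to a genuine $\Fq$-linear relation $c\bfa=d\bfb$.

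The main obstacle I anticipate is the last step: passing from the abstract dichotomy supplied by Theorem~\ref{main result} to the concrete, sharp statement of $\Fq$-linear dependence for this particular one-parameter family. This requires exploiting the special shape $\Phi_t(x)=tx+zx^q+x^{q^r}$ — in particular that the ``variable'' coefficient $z$ sits in the $x^q$ slot and the top coefficient is $1$ — to pin down precisely which $\bfa$ can be torsion at infinitely many specializations, presumably via a local analysis at the place $z=\infty$ (or at a place where the reduction is supersingular/inseparable) showing that the torsion condition forces $\bfa$ into the $\Fq$-line through $\bfb$. Everything else is either a direct consequence of Theorem~\ref{main result} or a routine polynomial computation.
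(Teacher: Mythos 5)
Your outline is broadly aligned with the paper's strategy (invoke Theorem~\ref{main result}, treat the easy direction by solving $\Phi_P(\bfa)=0$ in $\l$ for many irreducible $P$, and use the $\Fq$-linearity of $\Phi$ to transfer torsion between $\bfa$ and $\bfb=c\bfa$), but the substantive step of the converse — precisely the one you flag as ``the main obstacle I anticipate'' — is not actually carried out, and the mechanism you gesture at is not the one the paper uses. Theorem~\ref{main result} applies to \emph{polynomial} starting points satisfying the degree inequality~\eqref{condition on degrees of starting points}; constants $\bfa,\bfb\in K$ fail it outright. The paper's key move is to replace $\bfa,\bfb$ by $\bfa_1:=\Phi_{t^2}(\bfa)$ and $\bfb_1:=\Phi_{t^2}(\bfb)$. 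A direct computation shows these are polynomials in $z$ of the same degree $q^r>1$ with leading coefficients $\bfa^{q^{r+1}}$ and $\bfb^{q^{r+1}}$ respectively; since torsion of $\bfa(\l)$ is equivalent to torsion of $\Phi^\l_{t^2}(\bfa(\l))$, the hypothesis transfers, inequality~\eqref{condition on degrees of starting points} holds, and the leading-coefficient conclusion $C_{\bfa_1}^{\deg \bfb_1}/C_{\bfb_1}^{\deg\bfa_1}\in\Fpbar$ of Theorem~\ref{main result} gives $(\bfa/\bfb)^{q^{2r+1}}\in\Fpbar$, hence $\bfa/\bfb\in\Fpbar$.

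Your final ``Galois descent'' paragraph overcomplicates the last step and, as written, would not close the gap. The conclusion of Theorem~\ref{main result} is not a relation of the form $\sum c_i\sigma^i(\bfa)=\sum d_j\sigma^j(\bfb)$ with $\Fpbar$ coefficients; it is simply that the ratio of (powers of) leading coefficients lies in $\Fpbar$. Once one has $\bfa/\bfb\in\Fpbar$ and $\bfa/\bfb\in K$, the hypothesis that $\Fq$ is algebraically closed in $K$ immediately gives $\bfa/\bfb\in K\cap\Fpbar=\Fq$ — no descent is needed. Likewise the proposed ``local analysis at $z=\infty$ or a supersingular place'' is a different route than the paper's, and you would need to actually execute it; as stated it is a placeholder, not an argument. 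In short: the outline is sound, the easy direction is fine, but the step you correctly identify as the crux (passing from the conclusion of Theorem~\ref{main result} to $\Fq$-linear dependence of the constants $\bfa,\bfb$) is missing, and the paper fills it by the $\Phi_{t^2}$-iteration trick that you did not find.
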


Going in the opposite direction, when $K$ is a constant field
extension of $\Fq(t)$ we can give a more precise relation between
$\bfa$ and $\bfb.$ 

\begin{thm}
\label{nice corollary 2}
Let $r\ge 2$ be an integer, let $q$ be a power of a prime number $p$, let  $\bfa,\bfb\in K = \Fqbar(t)$ and let $\Phi:\Fq[t]\lra \End(\bG_a)$ be the family of Drinfeld modules given by
$$\Phi_t(x)=tx+z x^{q} + x^{q^r}.$$
Then there exist infinitely many $\l\in\Kbar$ such that both $\bfa$ and $\bfb$ are torsion points for $\Phi^\l$, if and only if $\bfa$ and $\bfb$ are linearly dependent over $\Fq$.
\end{thm}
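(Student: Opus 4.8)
I would prove the two implications separately; the forward one is the substantial direction, and I expect its crux to be an appeal to the Masser--Zannier-type mechanism already built into Theorem~\ref{main result}.

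\textbf{The easy direction.} Suppose $\bfa$ and $\bfb$ are $\Fq$-linearly dependent. If one of them is $0$ the conclusion is immediate: a vanishing point is $\Phi^\l$-torsion for every $\l$, and any fixed nonzero element of $K$ is $\Phi^\l$-torsion for infinitely many $\l\in\Kbar$ (for each $n$ the element $\Phi^\l_{t^n}(\bfa)\in K[z]$ is a nonconstant polynomial in $\l$, and tracking the multiplicities of its roots as $n$ grows shows these roots cannot all lie in a fixed finite set). Otherwise $\bfb=\mu\bfa$ with $\mu\in\Fq^*$; since $\Phi$ is an $\Fq$-algebra homomorphism, $\Phi^\l_\mu$ is multiplication by $\mu$ and hence commutes with every $\Phi^\l_P$, so for \emph{every} $\l$ the point $\bfa(\l)$ is $\Phi^\l$-torsion iff $\bfb(\l)=\mu\bfa(\l)$ is. Combined with the previous remark this gives the required infinitude.

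\textbf{The hard direction.} Assume there are infinitely many $\l$ for which $\bfa$ and $\bfb$ are both $\Phi^\l$-torsion. If $\bfa=0$ or $\bfb=0$ we are done, so assume both are nonzero. The first step is a degree computation in $z$: for nonzero $P\in\Fq[t]$ with $n=\deg P$ and leading coefficient $c_P\in\Fq^*$ we have $\Phi_P=c_P\Phi_{t^n}+(\text{terms of lower }t\text{-degree})$, and by induction on $n$ one checks that $\Phi_{t^n}(\bfa)\in K[z]$ has $z$-degree $q^{r(n-1)}$ with leading coefficient $\bfa^{q^{(n-1)r+1}}$ (for $n=1$ this is the $z\bfa^q$ term, and for $n\ge 2$ the leading term propagates through the $x^{q^r}$-part of $\Phi_t$ because $r\ge 2$). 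Hence $\Phi_P(\bfa)$ is nonzero (for $\deg P=0$ it is a nonzero scalar multiple of $\bfa$), so $\bfa$, and likewise $\bfb$, is not $\Phi$-torsion over $K(z)$; this rules out alternatives (1) and (2) of Theorem~\ref{main result}, so we are in case (3). Here I would invoke the proof of Theorem~\ref{main result}: the unlikely-intersection argument there shows that case (3) is witnessed by a genuine relation on the generic fibre, namely there exist nonzero $P,Q\in\Fq[t]$ with
\[
\Phi_P(\bfa)=\Phi_Q(\bfb)\quad\text{in }K[z].
\]

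To finish, compare $z$-degrees and leading $z$-coefficients in this identity. Matching $z$-degrees forces $\deg P=\deg Q$. If $\deg P=\deg Q=0$ then $c_P\bfa=c_Q\bfb$, so $\bfb/\bfa=c_P/c_Q\in\Fq^*$. If $\deg P=\deg Q=n\ge 1$, matching leading $z$-coefficients gives $c_P\bfa^{e}=c_Q\bfb^{e}$ with $e=q^{(n-1)r+1}$; thus $(\bfb/\bfa)^{e}=c_P/c_Q\in\Fq^*$ is a constant, so $\bfb/\bfa\in\Fqbar^*$, and since $e$ is a power of $p$ the map $x\mapsto x^{e}$ is injective on $\Fqbar$ and carries $\Fq$ bijectively onto $\Fq$, whence $\bfb/\bfa\in\Fq^*$. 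In every case $\bfa$ and $\bfb$ are $\Fq$-linearly dependent.

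\textbf{Expected main obstacle.} Everything after the relation $\Phi_P(\bfa)=\Phi_Q(\bfb)$ is routine bookkeeping with $z$-degrees, using only that $q$ is a $p$-power and $r\ge 2$; the real content is that relation, i.e.\ that infinitely many common torsion specializations push $(\bfa,\bfb)$ into a proper $\Phi$-submodule of $\bG_a^2$ over $\overline{K(z)}$ — exactly the phenomenon underlying Theorem~\ref{main result}. The only other point needing care is the claim used in the easy direction that a fixed nonzero element of $K$ is torsion for infinitely many $\Phi^\l$, which I would settle either by the root-multiplicity analysis mentioned above or by a specialization-of-heights estimate.
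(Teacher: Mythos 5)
Your easy direction is fine in spirit (the paper does the infinitude claim a bit differently, by solving $\Phi^\l_f(\bfa)=0$ for infinitely many pairwise coprime monic irreducible $f\in\Fq[t]$ and using that $\Phi[f]\cap\Phi[g]=\{0\}$, so the parameter sets are disjoint), and your degree bookkeeping for $\Phi_{t^n}(\bfa)$ is correct.

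The hard direction, however, has a genuine gap: you invoke a generic-fibre relation $\Phi_P(\bfa)=\Phi_Q(\bfb)$ in $K[z]$ as if it were a consequence of Theorem~\ref{main result} (or of its proof), but neither the statement nor the proof of that theorem produces any such relation. The equidistribution/Mandelbrot-set argument in Section~\ref{proof of our main result} only yields (i) the equality of the two ad\`elic height functions, hence the ``torsion iff torsion'' statement, and (ii) the leading-coefficient relation $C_\bfa^{\deg\bfb}/C_\bfb^{\deg\bfa}\in\Fpbar$. It does \emph{not} deliver a Masser--Zannier-type conclusion that $(\bfa,\bfb)$ lies in a proper $\Phi$-submodule of $\bG_a^2$ over $\overline{K(z)}$. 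Indeed, the authors point this out explicitly: they write that, unlike in the complex setting, they cannot extract explicit relations between $\bfa$ and $\bfb$ from the equidistribution machinery, and that Theorems~\ref{nice corollary} and \ref{nice corollary 2} are expected to hold more generally but their methods do not reach it; the restriction to constant $\bfa,\bfb$ is precisely what makes the workaround possible. So the relation $\Phi_P(\bfa)=\Phi_Q(\bfb)$ is not a step you can borrow -- establishing it would be at least as hard as the theorem itself.

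What the paper actually does instead is a local valuation argument. From the capacity relation (applied to $\Phi_{t^2}(\bfa)$ and $\Phi_{t^2}(\bfb)$, as in the proof of Theorem~\ref{nice corollary}) one first gets $\g:=\bfb/\bfa\in\Fqbar$. Assuming $\g\notin\Fq$, one picks $\l_0\in\Kbar$ with $\Phi^{\l_0}_t(\bfa)=0$, so that $\l_0\bfa^q=-t\bfa-\bfa^{q^r}$, and computes $\Phi^{\l_0}_t(\bfb)=(\g-\g^q)t\bfa+(\g^{q^r}-\g^q)\bfa^{q^r}$. Then one exhibits a place $v$ of $\Fqs(t)$ (either the place at infinity, when $\bfa\in\Fqs$, or a place where $|\bfa|_v>1$ otherwise) at which $|\Phi^{\l_0}_{t}(\bfb)|_v$ (or, in the awkward subcase $\g^{q^r}=\g^q$, $|\Phi^{\l_0}_{t^2}(\bfb)|_v$) exceeds the escape radius $r_v(\Phi^{\l_0}_t)$ of \eqref{definition N_v}, forcing $|\Phi^{\l_0}_{t^n}(\bfb)|_v\to\infty$ and hence $\bfb\notin\Phi^{\l_0}_{\tor}$ -- contradicting the ``torsion iff torsion'' conclusion of Theorem~\ref{main result}. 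The special structure of $K=\Fqbar(t)$ (so that nonconstant elements actually have a place of negative valuation, and one can control $|t^{q^{r-1}}-t|_v$) is essential to make these estimates work, which is exactly why the theorem is stated for this $K$. Your proposal short-circuits all of this by assuming a submodule relation that is not available.
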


Both Theorems \ref{nice corollary} and \ref{nice corollary 2} should be true without any extra condition on the (constant) starting points $\bfa$ and $\bfb$. However, our methods cannot be extended to this general case. In order to prove Theorems~\ref{nice corollary} and \ref{nice corollary 2} we employ a finer analysis of the valuation of torsion points for Drinfeld modules which fails for arbitrary extension fields $K$ of $\Fq(t)$.

A question analogous to the problem studied by Masser and Zannier
in the setting of arithmetic dynamics is the following.  Given
complex numbers $a$ and $b$ and an integer $d\ge 2$, when do there
exist infinitely many $\lambda\in\C$ such that both $a$ and $b$ are
preperiodic for the action of $f_{\lambda}(x):=x^d+\lambda$ on $\C$?
This question was first raised by  Zannier at an American Institute of
Mathematics workshop in 2008 and studied by Baker and DeMarco in~
\cite{Matt-Laura}.  They show that this happens if  and only if
$a^d=b^d$.  In \cite{prep}, the authors together with Tucker extended
Baker-DeMarco's result to general families of polynomials. On the
other hand, a Drinfeld module can be viewed as a
collection of (additive) polynomials acting on the affine line - so we can
apply the techniques and results that have been developed in
arithmetic dynamics  into the study of  Drinfeld modules,
especially to Diophantine problems arising from Drinfeld modules; this point of view
is proven to be useful in our paper. The problem studied in this paper naturally can be viewed as an
analogue of the problems from~\cite{Matt-Laura} and~\cite{prep} in the
setting of Drinfeld modules. Hence, our method for studing  the
question raised above follows closely the methods used
in~\cite{Matt-Laura}. We will apply the techniques used in~\cite{prep}
to our situation. The reader will find materials in
Section~\ref{spaces}   and results in Section~\ref{preliminaries}
parallel to~\cite[Sect. 4-6]{prep}. However, as we work in the world of
positive characteristic, we can not expect that all the techniques
developed in~\cite{Matt-Laura} and~\cite{prep} can be applied to our
situation.  This paper reveals the differences and
difficulties in the study of the same questions in the setting of
Drinfeld modules. In particular, we cannot use complex analysis in order to derive the explicit relations between $\bfa$ and $\bfb$ as in Theorems~\ref{nice corollary} and \ref{nice corollary 2} - instead we use arguments from valuation theory which are amenable to Drinfeld modules.

The plan of our paper is as follows. 
In Section~\ref{statement of results} we set up our notation and state our main result (Theorem~\ref{main result}), and then describe the method of our proof. 
% In Section~\ref{notation} we set
% up our notation, while in Section~\ref{spaces} we give a brief
In Section~\ref{spaces} we give a brief 
overview of Berkovich spaces. Then, in Section~\ref{preliminaries} we
compute  the capacities of the generalized $v$-adic Mandelbrot sets
associated to a generic point $\bfc$  for a family of Drinfeld
modules.  We proceed with our proof of Theorem~\ref{main result} in Section~\ref{proof
  of our main result}.  Then, in
Section~\ref{conclusion} we conclude our paper by proving
Theorems~\ref{nice main result}, \ref{nice corollary} and \ref{nice corollary 2}.

\begin{acknowledgments} 
  The
  authors thank Thomas Tucker for several useful
  conversations.
\end{acknowledgments}

%#######################################################################
\section{Preliminary and statement of the main results}
\label{statement of results}
%#######################################################################

In this section, we give a brief review of the theory of Drinfeld
modules and height functions that is relevant to our discussion below. 
Throughout the paper, we let $q$ be a power of a prime $p$ and let
$\Fq$ be a finite field of $q$ elements.

%#######################################################################
\subsection{Drinfeld Modules}
\label{subsec:drinfeld modules}
%% #######################################################################

We first define Drinfeld modules (of generic characteristic) -- for
more details, see \cite{Goss}. Let $L$ be a field extension of the
rational function field $\Fq(t)$.  
A Drinfeld $\Fq[t]$-module %(of generic characteristic) 
defined over $L$ is an $\Fq$-algebra homomorphism $\Phi:\Fq[t]\lra \End_L(\bG_a)$ such that 
$$\Phi_t(x):=tx+\sum_{i=1}^{r-1}a_i x^{q^i} + x^{q^r} \text{, with }
a_i \in L \quad\text{for all $i = 1, \ldots, r-1$}$$  
where $\Phi_a$ denotes the image of $a\in \Fq[t]$ under $\Phi$
and $r\ge 1$ is called the
rank of $\Phi$. Note that as an $\Fq$-algebra homomorphism, $\Phi$ is
uniquely determined by the 
action of $\Phi_t$ on $\bG_a$. In general, $\Phi_t(x)$ is not required to
be monic in $x$. However, at the expense of replacing $\Phi$ by a Drinfeld
module which is conjugated to it, i.e.  
\begin{equation}
\label{gamma conjugation}
\Psi_t(x)=\gamma^{-1}\Phi_t(\gamma x),
\end{equation} 
for a suitable $\gamma\in\bar{L}$ we obtain that $\Psi_t$ is
monic. Note that $\Psi$ is defined over $L(\gamma)$ which is a 
finite field extension of $L.$
% In this paper, we restrict our
% attention to the case where $\Phi_t(x)$ is a monic polynomial (in $x$)
% with coefficients in $L.$  

A point $x\in\bar{L}$ is called \emph{torsion} for $\Phi$ if there
exists a nonzero $a\in \Fq[t]$ such that $x$ is in the kernel of
$\Phi_a$. We denote by $\Phi_{\tor}$ the set of all torsion
points for $\Phi$. It is immediate to see that $x$ is torsion if and
only if its orbit under the action of $\Phi_t$ is finite, i.e. $x$ is
preperiodic for the map $\Phi_t$. Note that if $\Psi$ is a Drinfeld
module conjugated to $\Phi$ as in \eqref{gamma conjugation}, then
$x\in\Phi_{\tor}$ if and only if $\gamma^{-1}x\in \Psi_{\tor}$.

%#######################################################################
\subsection{Canonical heights}
\label{subsect:canonical height}
%#######################################################################

Let $K$ be a finitely generated transcendental extension of $\Fq$. At
the expense of replacing $K$ by a finite extension and replacing $\Fq$
with its algebraic closure in this finite extension, we may assume
there exists a smooth projective, geometrically irreducible variety
$\cV$ defined over $\Fq$ whose function field is $K$ (see
\cite{deJong}). We let 
then $\Omega_K$ be the set of places of $K$ corresponding to the
codimension one irreducible subvarieties of $\cV$. Then for each
$v\in\Omega_K$ there exists a 
positive integer $N_v$ such that for all $\alpha\in K^{\ast}$ we have
$\prod_{v\in \Omega}\,|\alpha|_v^{N_v} = 1$ where for $v\in \Omega_K$, the
corresponding absolute value is denoted by $|\cdot |_v$ (for more details see \cite[\S~2.3]{lang} or \cite[\S~1.4.6]{bg06}).   We note that if $x\in K$ is a $v$-adic unit for all $v\in\Omega_K$ then $x\in\Fq$.

Let $\C_v$ be a fixed completion of the algebraic closure of a
completion $K_v$ of $(K,|\cdot |_v)$. 
Note that there is a unique extension of $|\cdot |_v$ to an absolute
value on $\C_v$. By abuse of notation, we still denote this
extension by $|\cdot|_v$. Let $\Phi$ be a Drinfeld module of rank $r$
defined over $\C_v$.  Following Poonen \cite{Poonen} and Wang \cite{Wang},
 for each  $x\in \C_v$, the  \emph{local canonical height} of $x$ is
 defined as follows   
$$\hhat_{\Phi,v}(x):=\lim_{n\to\infty}
\frac{\log^+|\Phi_{t^n}(x)|_v}{q^{rn}},$$
where by $\log^+z$ we always denote $\log\max\{z,1\}$ (for any real
number $z$).  It is immediate that
$\hhat_{\Phi,v}(\Phi_{t^i}(x))=q^{ir} \hhat_{\Phi,v}(x)$
% (where $r$ is the rank of $\Phi$),
and thus $\hhat_{\Phi,v}(x)=0$ whenever $x\in\Phi_{\tor}$.

Now, if $f(x)=\sum_{i=0}^d a_i x^i$ is any polynomial defined over $\C_v$, then
$|f(x)|_v=|a_dx^d|>|x|_v$ when $|x|_v>r_v$, where
\begin{equation}
\label{definition N_v}
r_v=r_v(f):=\max\left\{1,\max\left\{
    \left|\frac{a_i}{a_d}\right|^{\frac{1}{d-i}}\right\}_{0\le
    i<d}\right\}.
\end{equation}
Moreover, for a Drinfeld module $\Phi$, if $|x|_v>r_v(\Phi_t)$
then $\hhat_{\Phi,v}(x)=\log|x|_v>0$.  For more details see \cite{equi}  
and \cite{L-C}.

We fix an algebraic closure $\Kbar$ of $K$, and for each
$v\in\Omega_K$ we fix an embedding $\Kbar\into\C_v$.
%Assume $\Phi_t\in\Kbar[x]$. Poonen \cite{Poonen} and Wang \cite{Wang} also
The \emph{global canonical height} $\hhat_{\Phi}(x)$ associated to the
Drinfeld module $\Phi$ was first introduced by Denis~\cite{denis92}
(Denis defined the global canonical heights for general $T$-modules
which are higher dimensional analogue of Drinfeld modules). 
For each $x\in\Kbar$, the global canonical height is defined  as
$$\hhat_\Phi(x)=\lim_{n\to\infty} \frac{h\left(\Phi_{t^n}(x)\right)}{q^{rn}},$$
where $h$ is the usual (logarithmic) Weil height on $\Kbar$.  As shown
in \cite{Poonen} and \cite{Wang}, the global canonical height
decomposes into a sum of the corresponding local canonical
heights. Furthermore, $\hhat_\Phi(x)=0$ if and only if
$x\in\Phi_{\tor}$ (see \cite[Proposition 3 (v)]{Wang}).

\begin{remark}
(1). We note that the theory of canonical height associated
to a Drinfeld module is a special case of the canonical heights
associated to morphisms on algebraic varieties developed by
Call~and~Silverman (see~\cite{Call-Silverman} for details). 
\\
(2) The definition for the canonical height functions given above
seems to depend on the particular choice of the map $\Phi_t$. On the
other hand, one can define the canonical heights
$\hhat_{\Phi}$  as in~\cite{denis92} by letting 
$$\hhat_\Phi(x)=\lim_{\deg(R)\to\infty}
\frac{h\left(\Phi_{R}(x)\right)}{q^{r\deg(R)}},$$
and similar formula for canonical local heights $\hhat_{\Phi,v}(x)$
where $R$ runs through all non-constant polynomials in $\Fq[t].$ In \cite{Poonen} and \cite{Wang} it is proven that  both definitions yield the same height
function. 
\end{remark}

Let $\Phi : \Fq[t] \to \End_L(\bG_a)$ be the Drinfeld module as in
Theorem~\ref{nice main result} with $L = K(z).$ Then, our main result is
the following.

\begin{thm}
\label{main result}
Let let $K$ be a field extension of $\Fq(t)$, $r\ge 2$ be an integer,
and let $g_1,\dots,g_{r-1}\in K[z]$. We let
$\Phi:\Fq[t]\lra \End_{K(z)}(\bG_a)$ be the family of Drinfeld 
modules defined by  
$$\Phi_t(x)=tx+\sum_{i=1}^{r-1}g_i(z) x^{q^i} +x^{q^r}.$$
Let $\bfa,\bfb\in K[z]$ and assume the following inequality holds:
\begin{equation}
\label{condition on degrees of starting points}
\min\{\deg(\bfa),\deg(\bfb)\}> \max\left\{\frac{\deg(g_1)}{q^r-q},\dots, \frac{\deg(g_{r-1})}{q^r-q^{r-1}}\right\}.
\end{equation}
If there exist infinitely many $\l\in\Kbar$ such that both $\bfa(\l)$
and $\bfb(\l)$ are torsion points for $\Phi^\l$, then for each
$\l\in\Kbar$ we have that $\bfa(\l)$ is torsion for $\Phi^\l$ if  and
only if $\bfb(\l)$ is torsion for $\Phi^\l$. Furthermore, in this
case, if $C_\bfa$ and $C_\bfb$ are the leading coefficients of $\bfa$,
respectively of $\bfb$, then
$\frac{C_\bfa^{\deg(\bfb)}}{C_\bfb^{\deg(\bfa)}}\in\Fpbar$. 
\end{thm}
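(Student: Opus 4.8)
The plan is to import the machinery of \cite{prep} adapted to Drinfeld modules, via the equidistribution of points of small canonical height on the Berkovich line. First I would set up, for each place $v\in\Omega_K$ and for the generic starting point $\bfc\in K[z]$ (taking $\bfc=\bfa$ and $\bfc=\bfb$ in turn), the \emph{generalized $v$-adic Mandelbrot set} $M_{\bfc,v}=\{\l\in\C_v : \hhat_{\Phi^\l,v}(\bfc(\l))=0\}$, or rather its natural analogue on $\Aberk{}$, together with the associated equilibrium measure $\mu_{\bfc,v}$ and capacity $\mathrm{cap}(M_{\bfc,v})$; this is exactly what Section~\ref{preliminaries} is advertised to compute. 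The degree hypothesis \eqref{condition on degrees of starting points} is precisely what guarantees that for each $\l$ the point $\bfc(\l)$ lies in the region $|x|_v > r_v(\Phi_t^\l)$ for the relevant archimedean-type place, so that $\hhat_{\Phi^\l,v}(\bfc(\l))$ behaves like $\deg(\bfc)\cdot(\text{something linear in } \log|\l|_v)$ near $\infty$; this lets me read off the leading behaviour of the height function and hence the capacity, which I expect to come out as a clean expression in $\deg(\bfc)$ and the leading coefficient $C_\bfc$.

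Next I would run the arithmetic equidistribution argument. The hypothesis that infinitely many $\l\in\Kbar$ have both $\bfa(\l)$ and $\bfb(\l)$ torsion produces an infinite sequence of $\gal(\Kbar/K)$-orbits that are simultaneously small with respect to the height $\l\mapsto \hhat_{\Phi^\l}(\bfa(\l))$ and the height $\l\mapsto \hhat_{\Phi^\l}(\bfb(\l))$ (both of which are heights of Call--Silverman type on $\PP^1$ over $K$, hence fit the Baker--Rumely / Chambert-Loir / Favre--Rivera-Letelier equidistribution framework in each completion $\C_v$). Equidistribution then forces $\mu_{\bfa,v}=\mu_{\bfb,v}$ for every $v\in\Omega_K$, and in particular the two canonical heights $\hhat_{\Phi^\l}(\bfa(\l))$ and $\hhat_{\Phi^\l}(\bfb(\l))$, as functions of $\l$, differ by a \emph{constant}: they have the same set of zeros. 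Since the zero set of $\l\mapsto\hhat_{\Phi^\l}(\bfa(\l))$ is exactly $\{\l : \bfa(\l)\in\Phi^\l_{\tor}\}$, this is the trichotomy collapsing to statement (3). (Here I use that under \eqref{condition on degrees of starting points} neither $\bfa$ nor $\bfb$ can be identically torsion, so the measures are genuine equilibrium measures and not supported at a single point.)

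Finally, for the numerical relation $C_\bfa^{\deg(\bfb)}/C_\bfb^{\deg(\bfa)}\in\Fpbar$: the equality $\mu_{\bfa,v}=\mu_{\bfb,v}$ at \emph{every} place $v$ gives, after comparing the $\log$-potentials (equivalently, the Arakelov/Mahler-type constants attached to the two Mandelbrot sets), that $\mathrm{cap}(M_{\bfa,v})$ and $\mathrm{cap}(M_{\bfb,v})$ are related in a way that, by the capacity computation of Section~\ref{preliminaries}, pins down $|C_\bfa^{\deg(\bfb)}/C_\bfb^{\deg(\bfa)}|_v=1$ for all $v\in\Omega_K$. An element of $K$ that is a $v$-adic unit at every place of $\Omega_K$ lies in $\Fq$ — more precisely, allowing the finite extension used to define $\cV$, it lies in $\Fpbar$ — which is exactly the assertion. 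The main obstacle, and the step requiring the most care, is the capacity computation together with checking that the degree inequality \eqref{condition on degrees of starting points} really does control the height function uniformly enough (at \emph{all} places, including the ones of ``bad reduction'' for the family, where $r_v(\Phi_t^\l)$ can be large) so that the product formula can be applied cleanly; the positive characteristic setting is what forces the valuation-theoretic route here rather than the complex-analytic shortcut available in \cite{Matt-Laura}.
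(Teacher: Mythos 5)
Your proposal is correct and follows essentially the same route as the paper: construct the $v$-adic Mandelbrot sets and compute their capacities from the Green's function asymptotics (Theorem~\ref{mandelbrot capacity}), apply the Baker--Rumely equidistribution theorem to the Galois orbits of the torsion parameters to force $\mu_{\bfa,v}=\mu_{\bfb,v}$ and hence $\M_\bfa=\M_\bfb$, deduce the iff statement from the resulting equality $h_{\M_\bfa}=h_{\M_\bfb}$ together with Corollary~\ref{second important remark}, and extract the relation on leading coefficients from equality of capacities at every place plus the product formula. The only (harmless) imprecision is the phrase ``differ by a constant'': equality of measures at all places in fact gives equality of the two adelic heights, not merely agreement up to an additive constant, but the conclusion about common zero sets is unaffected.
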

\noindent Note that inequality \eqref{condition on degrees of starting points}
prevents $\bfa$ and $\bfb$ to be torsion points for \emph{all} $\l$
(see Proposition~\ref{lem:degree in l}).  

\begin{remark}
One can consider a more general Drinfeld module such that the additive
group $\bG_a$ is equipped with an action by a ring $A$ cosisting of
all regular functions on a smooth projective curve defined over $\Fq$
with a fixed point removed. In our definition above, the curve is
$\bP^1$ with the distinguished point at infinity. However, we lose no
generality by restricting ourselves to the case $A=\Fq[t]$ since
always $\Fq[t]$ embeds into such a ring $A$, and moreover, the notion
of torsion is identical for $\Phi$ and for its restriction to $\Fq[t]$. 
\end{remark}

Our results and proofs are inspired by the results of
\cite{Matt-Laura} so that the strategy for the proof of Theorem~\ref{main
  result} essentially follows the ideas in the paper \cite{Matt-Laura}. 
However there are different  technical details in our proofs. Also,
Drinfeld modules are better vehicle for the method from
\cite{prep} which generalizes \cite{Matt-Laura}.

Following~\cite{Matt-Laura}, the strategy to prove Theorem~\ref{main
  result} is the following. For the family of Drinfeld modules $\Phi$, 
We define the $v$-adic generalized Mandelbrot sets $M_{\bfc,v}$
(inside the Berkovich affine line) associated to  any $\bfc\in K[z]$
similar to that introduced in \cite{Matt-Laura}. 
% now give a brief description. Let $\Phi$ be the family of Drinfeld modules as in
% Theorem~\ref{main result} and let $\bfc\in K[z]$. For each place
% $v\in\Omega_K$ we define the $v$-adic Mandelbrot set $M_{\bfc,v}$ for
% $\bfc$ with respect to the 
% family of Drinfeld modules $\Phi$ as the set of all $\l\in\C_v$ such
% that $\hhat_{\Phi^{\l}, v}(\bfc(\l))=0$, i.e. the set of all
% $\l\in\C_v$ such that $\bfc(\l)$ is bounded under the action of
% $\Fq[t]$ via $\Phi^\l$ with respect to the $v$-adic absolute value.   
We then  use the equidistribution result discovered independently by
Baker-Rumely~\cite{Baker-Rumely06}, Chambert-Loir~\cite{CL} and
Favre-Rivera-Letelier~\cite{favre-rivera, favre-rivera06} to deduce that the corresponding
$v$-adic Mandelbrot sets $M_{\bfa,v}$ and $M_{\bfb,v}$ for the given
points $\bfa$ and $\bfb$ are equal for each $v\in \Omega_K$. Here we
use the version obtained 
by Baker and Rumely~\cite{Baker-Rumely06} to connect the
equidistribution result to arithmetic capacities which is used in the
final step of the proof to show that the leading coeffients of $\bfa$
and $\bfb$ have the desired relation. 

In order to apply the equidistribution results and capacity theory
over nonarchimedean fields mentioned above, we need to
introduce the Berkovich space associated to the affine line which
we give an overview in the next section.

% %#######################################################################
% \section{Notation and preliminary}
% \label{notation}
%#######################################################################

%#######################################################################

\section{Equidistribution Theorem and Berkovich spaces}
\label{spaces}

%#######################################################################

As mentioned above, we will need to apply the arithmetic
equidistribution discovered independently by Baker-Rumely,
Chambert-Loir and Favre-Rivera-Letelier. When the base field is a
nonarchimedean field, the equidistribution theorem is best stated over
the Berkovich space associated to the underlying variety in question. 
For the convenience of the reader, we give a review on the
Berkovich spaces in this section. 

We use the  version of equidistribtution theorem obtained
by Baker and Rumely which connects the equidistribution result to the
theory of arithmetic capacities. Hence, the material presented in this
section is mainly from the book~\cite{Baker-Rumely} by Baker and
Rumely.   
The main body of this Section is taken from~\cite[Section 4]{prep}
which is written according to the summary in~\cite{Matt-Laura}. For
more details on the arithmetic 
equidistribution as well as a detailed introduction to
Berkovich line we refer the reader to the book~\cite{Baker-Rumely}. 

% In this section we introduce the Berkovich spaces, and state the
% equidistribution theorem of Baker and Rumely \cite{Baker-Rumely} which
% will be key for the proofs of Theorems~\ref{main result}. 

Let $K$ be a field of characteristic $p$ endowed with a product
formula, and let $\Omega_K$ be the set of its inequivalent absolute
values. For each $v\in\Omega_K$, we let $\C_v$ be the completion of an
algebraic closure of the completion of $K$ at $v$.   
Let $\Aberk{\C_v}$ denote the Berkovich affine  line over
$\C_v$ (see \cite{berkovich} or  \cite[\S~2.1]{Baker-Rumely} for
details). Then $\Aberk{\C_v}$ is a 
locally compact, Hausdorff, path-connected space containing $\C_v$ as a
dense subspace (with the topology induced from the $v$-adic absolute
value). As a topological space, $\Aberk{\C_v}$ is the set consisting of
all multiplicative seminorms, denoted by $[\cdot]_x,$ on $\C_v[T]$ extending
the absolute value $|\cdot|_v$ on $\C_v$ endowed with the weakest
topology such that the map $z\mapsto [f]_z$ is continuous for all
$f\in \C_v[T]$. 
The set of seminorms can be described as follows. If
$\{D(a_i,r_i)\}_i$ is any decreasing nested sequence of closed disks
$D(c_i,r_i)$ centered at points $c_i\in \C_v$ of radius $r_i\ge 0$, then
the map $f\mapsto \lim_{i\to\infty}\,[f]_{D(c_i,r_i)}$ defines a
multiplicative seminorm on $\C_v[T]$ where $[f]_{D(c_i,r_i)}$ is the
sup-norm of $f$ over the closed disk 
$D(a_i, r_i).$ Berkovich's classification theorem says that  
there are exactly four types of points, Type I, II, III and IV. 
The first three types of points can be described in terms of closed
disks $\zeta = D(c,r) = \cap D(c_i,r_i)$ where $c\in \C_v$ and $r\ge 0.$
The corresponding 
multiplicative seminorm is just $f \mapsto [f]_{D(c,r)}$ for $f\in
\C_v[T].$ Then, $\zeta$ is of Type I, II or III  if and only if $r= 0,
r\in |\C_v^{\ast}|_v$ or $r\not\in |\C_v^{\ast}|_v$ respectively. As for
Type IV points, they correspond to sequences of decreasing nested
disks $D(c_i,r_i)$ such that $\cap D(c_i,r_i) = \emptyset$ and the
multiplicative seminorm is $f\mapsto \lim_{i\to \infty}
[f]_{D(c_i,r_i)}$ as described above.  For details, see
\cite{berkovich} or \cite{Baker-Rumely}. 
For $\zeta\in \Aberk{\C_v},$ we sometimes write $|\zeta|_v$
instead of $[T]_{\zeta}.$ 

In order to apply the main equidistribution result from \cite[Theorem
7.52]{Baker-Rumely}, we recall the potential theory on the 
affine line over $\C_v$.   The right setting for nonarchimedean potential theory is the
potential theory on $\Aberk{\C_v}$ developed in \cite{Baker-Rumely}. We
quote part of a nice summary of the theory 
from \cite[\S~2.2~and~2.3]{Matt-Laura} without going into details. We
refer the reader to \cite{Matt-Laura, Baker-Rumely}  for all the
details and proofs. Let $E$ be a compact subset of
$\Aberk{\C_v}.$ Then analogous to the complex case, the logarithmic
capacity $\g(E) = e^{-V(E)}$ and the Green's function $G_E$ of $E$
relative to $\infty$ can be defined where $V(E)$ is the infimum of the 
\emph{energy integral} with respect to all possible probability
measures supported on $E$. More precisely,
$$V(E)=\inf_{\mu}\int\int_{E\times E} -\log\delta(x,y) d\mu(x)d\mu(y),$$
where the infimum is computed with respect to all probability measures $\mu$ supported on $E$, while for $x, y\in \Aberk{\C_v},$  the function $\delta(x,y)$ 
is the \emph{Hsia kernel} (see \cite[Proposition~4.1]{Baker-Rumely}):
$$\delta(x,y):=\limsup_{\substack{z,w\in \C_v\\ z\to x, w\to y}}\mid z-w\mid _v .$$ 
The following are basic properties of the logarithmic 
capacity of $E$. 
\begin{itemize}
\item
  If $E_1, E_2$ are two compact subsets of $\Aberk{\C_v}$ such that
  $E_1\subset E_2$ then $\g(E_1) \le \g(E_2).$

\item
  If $E = \{\zeta\}$ where $\zeta$ is a Type II or III point
  corresponding to   a closed disk $D(c,r)$ then $\g(E) = r > 0.$ 
  \cite[Example~6.3]{Baker-Rumely}. (This can be viewed as analogue of
  the fact that a closed disk $D(c,r)$ of positive radius $r$ in $\C_v$
  has logarithmic capacity $\g(D(c,r)) = r$.)    
\end{itemize}
If $\g(E) > 0,$ then the exists a unique probability measure $\mu_E$
attaining the infimum of the energy integral. Furthermore, the support of
$\mu_E$ is contained in the boundary of the unbounded component of
$\Aberk{\C_v}\setminus E.$  

The Green's function 
$G_E(z)$ of $E$ relative to infinity is a well-defined nonnegative
real-valued subharmonic function on $\Aberk{\C_v}$ which is harmonic on $\Aberk{\C_v}\setminus E$ (in the sense of
\cite[Chapter~8]{Baker-Rumely}). 
If $\g(E)=0$, then there exists no Green's function associated to the set $E$. Indeed, as shown in \cite[Proposition 7.17, page 151]{Baker-Rumely}, if $\gamma(\partial E)=0$ then  there exists no nonconstant harmonic function on $\Aberk{\C_v}\setminus E$ which is bounded below (this is the Strong Maximum Principle for harmonic functions defined on Berkovich spaces). 
The following result is \cite[Lemma~2.2~and~2.5]{Matt-Laura}, and it gives a characterization of
the Green's function of the set $E$.  

\begin{lemma}
  \label{green function}
Let $E$ be a compact subset of $\Aberk{\C_v}$ and let $U$ be the unbounded component of
$\Aberk{\C_v}\setminus E$. 

\begin{itemize}
\item[(1)]
If $\g(E)>0$ (i.e. $V(E)<\infty$), then
  $G_E(z) = V(E) + \log |z|_v $ for all $z\in \Aberk{\C_v}$ such that
  $|z|_v$ is sufficiently large.

\item[(2)]
If $G_E(z) = 0$ for all $z\in E,$ then $G_E$ is continuous on
  $\Aberk{\C_v},$ $\supp(\mu_E) = \partial U$ and $G_E(z) > 0$ if and
  only if $z\in U.$  

\item[(3)]
  If $G :\Aberk{\C_v}\to \R$ is a continuous subharmonic function which
  is harmonic on $U,$ identically zero on $E,$ and such that $G(z) -
  \log^+|z|_v$ is bounded, then $G = G_E$. Furthermore, if $G(z)=\log|z|_v + V + o(1)$ (as $|z|_v\to\infty$) for some $V<\infty$, then $V(E)=V$ and so, $\g(E)=e^{-V}$.

\end{itemize}

\end{lemma}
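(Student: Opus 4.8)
The plan is to deduce all three parts from the non-archimedean potential theory of \cite{Baker-Rumely} (the statement is essentially \cite[Lemmas~2.2 and~2.5]{Matt-Laura}); the one structural input is the description of the Green's function as $G_E(z) = V(E) - u_{\mu_E}(z)$, where $u_{\mu_E}(z) := \int_E -\log\delta(z,w)\,d\mu_E(w)$ is the potential of the equilibrium measure, valid precisely when $\gamma(E) > 0$ --- so parts~(2) and~(3) are tacitly in that case, since otherwise no Green's function exists. For part~(1) the point is that the Hsia kernel is trivial far from $E$: once $|z|_v > \sup_{w \in E} |w|_v$ one has $\delta(z,w) = |z|_v$ for every $w \in E$, hence for every $w \in \supp\mu_E$; integrating gives $u_{\mu_E}(z) = -\log|z|_v$, so $G_E(z) = V(E) + \log|z|_v$ for such $z$.

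For part~(2) I would argue in three steps. First, continuity of $G_E$: the potential $u_{\mu_E}$ is lower semicontinuous, and by hypothesis it is constant (equal to $V(E)$) on $E \supseteq \supp\mu_E$, so the continuity/domination principle for potentials on $\Aberk{\C_v}$ promotes this to continuity on all of $\Aberk{\C_v}$, whence $G_E$ is continuous. Second, the equivalence $G_E(z) > 0 \Leftrightarrow z \in U$: on $E$, $G_E = 0$ by assumption; on a bounded component $W$ of $\Aberk{\C_v} \setminus E$ the function $G_E$ is harmonic, continuous on $\overline{W}$, and vanishes on $\partial W \subseteq E$, so the maximum principle gives $G_E \equiv 0$ there; and if $G_E$ vanished at a point of $U$, then --- $G_E$ being nonnegative and harmonic on $U$ --- the strong maximum principle applied to $-G_E$ would force $G_E \equiv 0$ on $U$, contradicting part~(1). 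Third, $\supp\mu_E = \partial U$: the inclusion $\subseteq$ is recorded in the excerpt, and for the reverse, a point $x \in \partial U$ not in $\supp\mu_E$ would have a neighborhood $N$ on which $G_E$ is harmonic (since $\Delta G_E = \mu_E - \delta_\infty$ and $\infty \notin N$), while $\partial U \subseteq E$ gives $G_E(x) = 0 = \min G_E$, so the strong maximum principle would make $G_E$ vanish throughout $N$, contradicting $G_E > 0$ on the nonempty set $N \cap U$.

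For part~(3) I would put $\mu := \Delta G + \delta_\infty$. The boundedness of $G - \log^+|z|_v$ makes $\mu$ a probability measure, and the maximum principle (together with a sub-mean-value estimate at the points of $E$ that bound the bounded components of $\Aberk{\C_v} \setminus E$) forces $G$ to vanish on those components, so $G$ is harmonic on $\Aberk{\C_v} \setminus E$ and $\mu$ is supported on $E$. Then $\Delta(G + u_\mu) = 0$, so $G + u_\mu$ is harmonic on all of $\Aberk{\C_v}$ and bounded below, hence constant, say $\equiv V$; comparing behaviour at infinity gives $G(z) = V + \log|z|_v$ for $|z|_v$ large, which also identifies $V$ with the constant appearing in the ``furthermore'' clause. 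Since $G = V - u_\mu \ge 0$ (on $U$ by the minimum principle, using $G \ge 0$ on $\partial U$ and $G \to \infty$ at $\infty$), we get $u_\mu \le V$ everywhere while $u_\mu = V$ on $E$; Frostman's characterization of the equilibrium measure then yields $\mu = \mu_E$ and $V = V(E)$, whence $G = V(E) - u_{\mu_E} = G_E$ and $\gamma(E) = e^{-V}$.

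The step I expect to demand the most care is the continuity assertion in part~(2) --- correctly invoking the non-archimedean continuity/domination principle --- together with the recurring bookkeeping for the bounded complementary components of $E$ in parts~(2) and~(3); once these are in place, the rest is a routine application of the maximum principle on $\Aberk{\C_v}$ and of Frostman's theorem.
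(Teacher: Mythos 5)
The paper offers no proof of this lemma at all: it is quoted directly from \cite[Lemmas~2.2~and~2.5]{Matt-Laura}, resting on the potential theory of \cite{Baker-Rumely}. So any proof you give is necessarily a different route, and your reconstruction of parts (1) and (2) is sound and is essentially the standard one: the ultrametric property gives $\delta(z,w)=|z|_v$ once $|z|_v>\sup_{w\in E}|w|_v$, so the equilibrium potential equals $-\log|z|_v$ there; and continuity in (2) via the continuity (Maria) principle, plus the maximum principle on the components of the complement and the harmonicity of $G_E$ off $\supp(\mu_E)$, is exactly how the sources proceed.

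In part (3), however, the step ``the maximum principle (together with a sub-mean-value estimate at the points of $E$ that bound the bounded components) forces $G$ to vanish on those components'' does not follow from the stated hypotheses. The maximum principle only gives $G\le 0$ on a bounded component $W$ (as $G$ is subharmonic there and vanishes on $\partial W\subseteq E$), and global subharmonicity at the points of $\partial W$ does not upgrade this to equality. Concretely, take $E=\{\zeta\}$ the Gauss point: let $G=\log^+|z|_v$ outside one residue component $W$, and on $W$ let $G$ decrease linearly along the tree with small slope $\e>0$ before becoming locally constant; the total outgoing slope at $\zeta$ is $1-\e$, so $G$ is continuous, globally subharmonic, harmonic on $U$, identically zero on $E$, with $G-\log^+|z|_v$ bounded, yet $G\ne G_E=\log^+|z|_v$. (The classical analogue: $E$ the unit circle and $G=\e(|z|^2-1)$ inside.) Thus (3) as literally stated needs the tacit convention that $\Aberk{\C_v}\setminus E$ is connected, i.e.\ that $E$ is ``filled''; this is automatic in the application, since $G_{\bfc,v}$ is harmonic and strictly positive off $M_{\bfc,v}$, so $M_{\bfc,v}$ can have no holes. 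Your argument on $\bar{U}$ --- Riesz decomposition, comparison of asymptotics at infinity, and Frostman --- is correct and already yields the ``Furthermore'' clause, which is the only part of (3) the paper actually uses.
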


To state the equidistribution result from \cite{Baker-Rumely}, we
consider the compact \emph{Berkovich ad\`elic sets} which are of the
following form
$$
\E := \prod_{v\in \Omega}\, E_v
$$
where $E_v$ is a non-empty compact subset of $\Aberk{\C_v}$ for each
$v\in \Omega$ and where $E_v$ is the closed unit disk $\cD(0,1)$ in
$\Aberk{\C_v}$ for all but finitely many $v\in \Omega.$ The
\emph{logarithmic capacity} $\g(\E)$ of $\E$ is defined as follows
$$\g(\E) = \prod_{v\in \Omega}\,\g(E_v)^{N_v}, $$
where the positive integers $N_v$ are the ones associated to the product formula on the global field $K$. 
Note that this is a finite product as for all but finitely many 
$v\in\Omega,$ $\g(E_v) = \g(\cD(0,1)) = 1.$   Let $G_v = G_{E_v}$  be
the Green's function of $E_v$ 
relative to $\infty$ for each $v\in \Omega.$ For every $v\in\Omega,$ we
fix an embedding $\Kbar\into\C_v.$ Let 
$S\subset \Kbar$ be any finite subset that is invariant under the action
of the Galois group 
$\gal(\Kbar/K)$. We define the height $h_{\E}(S)$ of $S$ relative to $\E$
by
\begin{equation}
\label{def ade hig}
h_{\E}(S) = \sum_{v\in\Omega} N_v\left(\frac{1}{|S|}\sum_{z\in S}G_v(z)\right).
\end{equation}
Note that this definition is independent of any particular embedding
$\Kbar\into\C_v$ that we choose at $v\in \Omega.$
The following is a special case of the equidistribution result 
\cite[Theorem~7.52]{Baker-Rumely} that we need for our application.
\begin{thm}
 \label{thm:equidistribution}
Let $\E = \prod_{v\in \Omega} E_v$ be a compact Berkovich ad\`elic set
 with $\g(\E)=1.$ Suppose that 
 $S_n$ is a sequence of $\gal(\Kbar/K)$-invariant finite subsets of
 $\Kbar$ with $|S_n|\to \infty$ and $h_{\E}(S_n) \to 0$ as $n\to
 \infty.$ 
For each $v\in\Omega$ and for each $n$ let $\d_n$ be the discrete
 probability measure supported equally on the elements of $S_n.$ Then the
 sequence of measures $\{\d_n\}$ converges weakly to $\mu_v$ the
 equilibrium measure on $E_v.$
\end{thm}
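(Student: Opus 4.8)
The plan is to obtain Theorem~\ref{thm:equidistribution} as a direct specialization of the adelic equidistribution theorem of Baker and Rumely, \cite[Theorem~7.52]{Baker-Rumely}; there is no new analytic content, so the only task is to match the hypotheses. Recall that their theorem concerns a compact Berkovich adelic set $\E=\prod_{v\in\Omega}E_v$ over the projective line with $\g(\E)=1$: if $(S_n)$ is a sequence of $\gal(\Kbar/K)$-invariant finite subsets of $\bP^1(\Kbar)$ with $|S_n|\to\infty$ and $h_{\E}(S_n)\to 0$, then for every $v\in\Omega$ the normalized counting measures $\d_n$ supported on $S_n$ converge weakly to the equilibrium measure $\mu_v$ of $E_v$. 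This is verbatim our statement except that our sets $E_v$ live in the Berkovich affine line $\Aberk{\C_v}$ and our sets $S_n$ are finite subsets of $\Kbar=\bA^1(\Kbar)$ rather than of $\bP^1(\Kbar)$.

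First I would record why passing from $\bA^1$ to $\bP^1$ changes nothing. Each $E_v$ is a \emph{compact} subset of $\Aberk{\C_v}$, hence is disjoint from a Berkovich neighborhood of the point at infinity; under the open embedding of $\Aberk{\C_v}$ into the Berkovich projective line over $\C_v$, the logarithmic capacity, the Green's function relative to $\infty$, the equilibrium measure $\mu_v$, and therefore the adelic height $h_\E$ of \eqref{def ade hig} are all unchanged. So the adelic set and all the data attached to it agree whether computed on $\bA^1$ or on $\bP^1$, and since every $S_n\subset\Kbar$ avoids the point at infinity, the hypotheses and the conclusion of \cite[Theorem~7.52]{Baker-Rumely} transfer directly. (Should one prefer the version of that theorem phrased for a non-repeating sequence of points, one simply lists the $\gal(\Kbar/K)$-orbits occurring in the $S_n$: the condition $|S_n|\to\infty$ is exactly what makes this list admissible, and weak convergence for the $\d_n$ follows from weak convergence along the orbits by averaging.)

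The one point that genuinely requires care — and the step I would double-check — is the bookkeeping of normalizations: one must verify that the weights $N_v$ in our formula $h_\E(S)=\sum_{v\in\Omega} N_v\bigl(\tfrac1{|S|}\sum_{z\in S}G_v(z)\bigr)$ and in our definition $\g(\E)=\prod_{v\in\Omega}\g(E_v)^{N_v}$ are the same ones appearing in \cite[Ch.~6--7]{Baker-Rumely}, so that our hypothesis ``$\g(\E)=1$'' is literally theirs and our hypothesis ``$h_\E(S_n)\to 0$'' is literally their hypothesis that $h_\E(S_n)$ tends to the infimum of $h_\E$ over admissible sequences. Here one uses that every $G_v\ge 0$ on $\Aberk{\C_v}$ (so $h_\E(S)\ge 0$ for all $S$, giving $\inf h_\E\ge 0$) together with the Fekete--Szeg\H{o} theorem for adelic sets of capacity $1$, which produces Galois orbits lying in $\E$ with height tending to $0$ (so $\inf h_\E=0$). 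With this identification in place, Theorem~\ref{thm:equidistribution} is immediate from \cite[Theorem~7.52]{Baker-Rumely}.
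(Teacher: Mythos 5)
Your proposal is correct and takes the same route as the paper, which presents this theorem as a direct specialization of \cite[Theorem~7.52]{Baker-Rumely} without supplying any further argument. The points you flag (that for a compact $E_v\subset\Aberk{\C_v}$ the capacity, Green's function, and equilibrium measure are unaffected by passing to the Berkovich projective line, and that the weights $N_v$ in $h_\E$ and $\g(\E)$ match Baker--Rumely's normalization) are exactly the routine checks needed to make the citation rigorous.
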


%#######################################################################
\section{The dynamics of the Drinfeld module family $\Phi$}  
\label{preliminaries}
%#######################################################################

Let $K$ be a finitely generated field extension of $\Fq(t)$. We work
with a family of Drinfeld modules $\Phi$ 
as given  in Theorem~\ref{main result}, i.e.
$$\Phi_t(x)=tx+  \sum_{i=1}^{r-1} g_i(z) x^{q^i}+x^{q^r},  $$
with $g_i(z) \in K[z]$ for $i = 1, \ldots, r-1$.  For convenience, we
let $g_0(z):=t$ be the corresponding constant polynomial.  
Let $\bfc \in K[z]$ be given. We define $f_{\bfc,n}(z):=
\Phi_{t^n}(\bfc)$ for each 
$n\in\N$. Note that $f_{\bfc,n}$ is a polynomial in $z$ with
coefficients in $K.$ 
Assume $m := \deg(\bfc)$ satisfies  the inequality
\begin{equation}
\label{what we need for degree}
m=\deg(\bfc)>\max_{i=0}^{r-1}\frac{\deg(g_i)}{q^r-q^i}.
\end{equation} 
We let $C_m$ be the leading coefficient of $\bfc$. 
In the next Lemma we compute the degrees of all polynomials $f_{\bfc,n}$ for all positive integers $n$.

\begin{lemma}
\label{lem:degree in l}
With the above hypothesis, the polynomial $f_{\bfc,n}(z)$ has degree
$m\cdot q^{rn}$ and leading coefficient $C_m^{q^{rn}}$ for each
$n\in\N$. 
\end{lemma}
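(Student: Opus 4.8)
The plan is to prove this by induction on $n$, using the recursion $f_{\bfc,n+1}=\Phi_t(f_{\bfc,n})$, which holds because $\Phi$ is an $\Fq$-algebra homomorphism, so $\Phi_{t^{n+1}}=\Phi_t\circ\Phi_{t^n}$. The base case $n=0$ is immediate: $\Phi_{t^0}=\Phi_1=\mathrm{id}$, so $f_{\bfc,0}=\bfc$ has degree $m=m\cdot q^{0}$ and leading coefficient $C_m=C_m^{q^0}$. (If one prefers to start the induction at $n=1$, the same computation as the inductive step below applies verbatim.) Throughout I would also record that hypothesis \eqref{what we need for degree}, read for the index $i=0$ where $g_0=t\neq 0$ has $\deg_z g_0=0$, forces $m\ge 1$; this is what makes the iteration nondegenerate.

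For the inductive step, assume $f:=f_{\bfc,n}$ has degree $m q^{rn}$ and leading coefficient $C_m^{q^{rn}}$. Then
\[
f_{\bfc,n+1}=\Phi_t(f)=\sum_{i=0}^{r-1} g_i(z)\,f^{q^i}+f^{q^r},
\]
and I would compare degrees term by term. The top summand $f^{q^r}$ has degree $q^r\cdot mq^{rn}=mq^{r(n+1)}$ and leading coefficient $\bigl(C_m^{q^{rn}}\bigr)^{q^r}=C_m^{q^{r(n+1)}}$, using multiplicativity of degree and the $q$-power behaviour of leading coefficients of polynomials over the integral domain $K[z]$. For $0\le i\le r-1$,
\[
\deg\bigl(g_i f^{q^i}\bigr)=\deg(g_i)+q^i\,mq^{rn}<q^r\,mq^{rn},
\]
since $\deg(g_i)<(q^r-q^i)m\le (q^r-q^i)\,mq^{rn}$ by \eqref{what we need for degree} (for $i=0$ this reads $0<(q^r-1)\,mq^{rn}$, which is clear). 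A finite sum of polynomials each of degree $<mq^{r(n+1)}$ has degree $<mq^{r(n+1)}$, so no lower term can interfere with the top term of $f^{q^r}$; hence $f_{\bfc,n+1}$ has degree $mq^{r(n+1)}$ and leading coefficient $C_m^{q^{r(n+1)}}$, completing the induction.

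The argument is essentially routine; the only point needing care is that the single hypothesis $\deg(\bfc)>\max_i \deg(g_i)/(q^r-q^i)$ is precisely what makes $f^{q^r}$ strictly dominate every intermediate term $g_i f^{q^i}$, and — the crux of the induction — that this domination, once valid at level $n$, survives to level $n+1$: it does, because $\deg(f_{\bfc,n})=mq^{rn}\ge m$, so the required inequalities only become easier as $n$ grows (one uses $q^{rn}\ge 1$). No characteristic-$p$ input is needed beyond the elementary fact that $\deg(h^{q})=q\deg(h)$ and $\mathrm{lc}(h^{q})=\mathrm{lc}(h)^{q}$ in $K[z]$.
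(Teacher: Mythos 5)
Your proof is correct and matches the paper's approach: the paper also proves this by induction on $n$, using hypothesis \eqref{what we need for degree} to show the iterate's top term $\bfc^{q^{rn}}$ (coming from always taking the $x^{q^r}$ branch) strictly dominates all contributions involving the $g_i$. You have simply written out the degree comparisons that the paper labels "follows easily."
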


\begin{proof}
The assertion  follows
easily by induction on $n$, using \eqref{what we need for degree}, since the term of highest degree in $z$ from $f_{\bfc,n}(z)$ is  $\bfc^{q^{rn}}$.  
\end{proof}

We immediately obtain as a corollary of Lemma~\ref{lem:degree in l} the fact that $\bfc$ is not preperiodic for $\bff$.  Furthermore we obtain that if $\bfc(\l)\in\Phi^\l_{\tor}$, then $\l\in\Kbar$.

Fix a place $v\in \Omega_K.$ Our first task  is to define the
{\em generalized Mandelbrot set} $M_{\bfc,v}$ associated to $\bfc$ and
establish that $M_{\bfc,v}$ is a compact subset of $\Aberk{\C_v}$. Roughly
speaking, $M_{\bfc,v}$ is the subset of $\C_v$ consisting of all $\l\in
\C_v$ such that $\bfc(\l)$ is $v$-adic bounded under the action of
$\Fq[t]$ with respect to the Drinfeld module structure $\Phi^\l$. A
simple observation is that the orbit of $\bfc(\l)$ under the action of
$\Fq[t]$ is $v$-adic bounded if and only if under the action of
$\Phi_t^\l$ the orbit $\{\Phi_{t^n}^\l(\bfc(\l))\mid n=0,1,2,\ldots\}$
is $v$-adic bounded.  Hence, in our definition for the 
generalized Mandelbrot set $M_{\bfc,v}$ below we shall only consider the
orbit of $\bfc(\l)$ under the action of $\Phi_t^\l.$ 
% defined in Section~\ref{notation}. 
Following \cite{Matt-Laura, prep},  we put 
% extend the definition of our $v$-adic Mandelbrot set $M_{\bfc,v}$ to be a
% subset of the affine Berkovich line $\Aberk{\C_v}$ as follows:
\[
 M_{\bfc,v} := \{\l \in \Aberk{\C_v} : \sup_n \left[f_{\bfc,n}(T)\right]_\l < \infty\}.
\]
% Note that  our present definition for $M_{\bfc,v}$ yields more points than our definition from 
% Section~\ref{notation}. 
Let $\l\in \C_v$ and recall the local canonical height
$\hhat_{\Phi^\l,v}(x)$ of $x\in \C_v$  is given by the formula 
$$
\hhat_{\Phi^{\l},v}(x) =\lim_{n\to\infty}
\frac{\log^+|\Phi_{t^n}^\l(x)|_v}{q^{rn}}.
$$
Notice that $\hhat_{\Phi^\l,v}(x)$ is a continuous function of both $\l$
and $x$. 
As $\C_v$ is a dense subspace of $\Aberk{\C_v}$,
continuity in $\l$ implies that  the canonical local height function 
$\hhat_{\Phi^\l,v}(\bfc(\lambda))$ has a natural extension on $\Aberk{\C_v}$ (note that the topology on
$\C_v$ is the restriction of the weak topology on $\Aberk{\C_v}$, so any
continuous function
on $\C_v$ will automatically have a unique extension to $\Aberk{\C_v}$). In the
following, we will extend $\hhat_{\Phi^\l,v}(\bfc(\lambda))$ to a
function of $\l$ on $\Aberk{\C_v}$ and view it as a continuous function on
$\Aberk{\C_v}.$ It follows from the definition of $M_{\bfc,v}$
that   
$\l\in M_{\bfc,v}$ if and only if $\hhat_{\Phi^\l,v}(\bfc(\lambda)) = 0.$  Thus, 
$M_{\bfc,v}$ is a closed subset of $\Aberk{\C_v}.$ In fact, the following is
true. 

\begin{prop}
\label{prop:bounded mandelbrot} 
$M_{\bfc,v}$ is a compact subset of $\Aberk{\C_v}.$ 
\end{prop}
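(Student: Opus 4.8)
The plan is to show that $M_{\bfc,v}$ is closed and bounded in $\Aberk{\C_v}$; since $\Aberk{\C_v}$ is locally compact, closed bounded sets (i.e. closed subsets of a closed Berkovich disk) are compact. Closedness has essentially been established already: by the discussion preceding the proposition, $M_{\bfc,v}$ is precisely the zero locus of the continuous function $\l\mapsto \hhat_{\Phi^\l,v}(\bfc(\l))$ on $\Aberk{\C_v}$, hence closed. So the real content is boundedness, i.e. finding an explicit radius $R$ depending only on $v$, on the coefficients $g_i$, and on $\bfc$, such that $M_{\bfc,v}\subseteq \cD(0,R)$.

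To get boundedness I would argue as follows. Recall $f_{\bfc,1}(z)=\Phi_t(\bfc(z))=t\bfc(z)+\sum_{i=1}^{r-1}g_i(z)\bfc(z)^{q^i}+\bfc(z)^{q^r}$, a polynomial in $z$ whose leading term, by inequality \eqref{what we need for degree} and Lemma~\ref{lem:degree in l}, is $C_m^{q^r} z^{mq^r}$. Writing $h(z):=f_{\bfc,1}(z)$, for $|z|_v$ larger than the threshold $r_v(h)$ defined in \eqref{definition N_v} we have $[h(T)]_\l=|\l|_v^{\deg h}\cdot|C_m|_v^{q^r}$, which strictly exceeds $|\l|_v^m \ge [\bfc(T)]_\l$ once $|\l|_v$ is large enough (this is where the degree inequality \eqref{what we need for degree}, forcing $mq^r>m$, is used, together with a possible adjustment so the leading coefficient term dominates). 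More precisely one chooses $R$ so that for all $\l$ with $|\l|_v>R$ one has simultaneously $|\l|_v>r_v(\Phi^\l_t)$ — the threshold appearing in the local-height formula $\hhat_{\Phi^\l,v}(x)=\log|x|_v$ for $|x|_v>r_v(\Phi^\l_t)$ — and $|\bfc(\l)|_v>r_v(\Phi^\l_t)$, so that $\hhat_{\Phi^\l,v}(\bfc(\l))=\log|\bfc(\l)|_v=m\log|\l|_v>0$; thus such $\l$ cannot lie in $M_{\bfc,v}$. Extending this from classical points $\l\in\C_v$ to all $\zeta\in\Aberk{\C_v}$ with $|\zeta|_v>R$ follows by continuity/density, since the extended function $\hhat_{\Phi^\l,v}(\bfc(\l))$ agrees with $\log|\zeta|_v^m$ on the region $|\zeta|_v>R$ (both are continuous and agree on a dense set).

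The one technical point requiring care — and the step I expect to be the main obstacle — is making the threshold $R$ genuinely uniform in $\l$: the radius $r_v(\Phi^\l_t)$ from \eqref{definition N_v} involves $|g_i(\l)|_v$, which itself grows with $|\l|_v$, so one must check that $r_v(\Phi^\l_t)$ grows strictly slower (as a power of $|\l|_v$) than $|\bfc(\l)|_v\sim|\l|_v^m$. Concretely $r_v(\Phi^\l_t)$ is governed by terms like $|g_i(\l)|_v^{1/(q^r-q^i)}\sim|\l|_v^{\deg(g_i)/(q^r-q^i)}$, and inequality \eqref{what we need for degree}, namely $m>\max_i \deg(g_i)/(q^r-q^i)$, is exactly what guarantees $|\l|_v^m$ eventually dominates $r_v(\Phi^\l_t)$. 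Once this bookkeeping is done, one concludes that $M_{\bfc,v}$ is contained in a closed Berkovich disk of radius $R$, is closed, and therefore compact.

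\begin{proof}
By the discussion preceding the statement, $M_{\bfc,v}=\{\l\in\Aberk{\C_v} : \hhat_{\Phi^\l,v}(\bfc(\l))=0\}$ is the zero set of a continuous function on $\Aberk{\C_v}$, hence closed. Since $\Aberk{\C_v}$ is locally compact, it suffices to show that $M_{\bfc,v}$ is bounded, i.e. contained in a closed Berkovich disk $\cD(0,R)$ for some $R>0$.

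Set $m:=\deg(\bfc)$ and let $C_m$ be the leading coefficient of $\bfc$. By \eqref{what we need for degree} we may fix a real number $\rho$ with
$$
\max_{i=0}^{r-1}\frac{\deg(g_i)}{q^r-q^i}<\rho<m .
$$
For $\l\in\C_v$ with $|\l|_v$ large we have, on the one hand, $|\bfc(\l)|_v=|C_m|_v\,|\l|_v^m$, and on the other hand each term $\left|\tfrac{g_i(\l)}{1}\right|_v^{1/(q^r-q^i)}$ appearing in $r_v(\Phi^\l_t)$ (see \eqref{definition N_v}, noting the leading coefficient of $\Phi^\l_t$ is $1$) is of size $O\!\left(|\l|_v^{\deg(g_i)/(q^r-q^i)}\right)=O\!\left(|\l|_v^\rho\right)$, and likewise $1$ is bounded. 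Hence there is $R_0>0$ so that $r_v(\Phi^\l_t)\le |\l|_v^\rho<|\l|_v^m=|\bfc(\l)|_v/|C_m|_v$ for all $|\l|_v>R_0$; enlarging $R_0$ if necessary we also arrange $|\bfc(\l)|_v>r_v(\Phi^\l_t)$ for all such $\l$ (absorbing the constant $|C_m|_v$). For $\l\in\C_v$ with $|\l|_v>R_0$, the displayed property of $r_v(\Phi^\l_t)$ gives
$$
\hhat_{\Phi^\l,v}(\bfc(\l))=\log|\bfc(\l)|_v=\log|C_m|_v+m\log|\l|_v>0,
$$
so $\l\notin M_{\bfc,v}$.

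It remains to treat non-classical points. Choose $R>R_0$. For $\zeta\in\Aberk{\C_v}$ with $|\zeta|_v>R$, the function $\l\mapsto\hhat_{\Phi^\l,v}(\bfc(\l))$ is continuous and agrees with $\log|C_m|_v+m\log|\l|_v$ on the dense set of classical points $\l$ with $|\l|_v>R_0$; by continuity the two agree at $\zeta$ as well, so $\hhat_{\Phi^\l,v}(\bfc(\l))\big|_{\l=\zeta}=\log|C_m|_v+m\log|\zeta|_v>0$ once $R$ is taken large enough. Therefore $\zeta\notin M_{\bfc,v}$. Consequently $M_{\bfc,v}\subseteq\cD(0,R)$, and being a closed subset of a closed disk in the locally compact space $\Aberk{\C_v}$, it is compact.
\end{proof}
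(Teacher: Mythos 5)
Your proof is correct and follows essentially the same route as the paper's: reduce to boundedness (closedness being immediate from continuity of the local height), then use inequality \eqref{what we need for degree} to show that for $|\l|_v$ large the leading term $\bfc(\l)^{q^r}$ dominates in $\Phi^\l_t(\bfc(\l))$, so the orbit escapes. The paper (Lemma~\ref{M_a is bounded for archimedean v}) runs the escape estimate as an explicit induction on $|\Phi^\l_{t^n}(\bfc(\l))|_v$, while you package it via the threshold $r_v(\Phi^\l_t)$ from \eqref{definition N_v} and the formula $\hhat_{\Phi^\l,v}(x)=\log|x|_v$ for $|x|_v>r_v(\Phi^\l_t)$; these are the same estimate in slightly different clothing. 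Your extra density argument to pass from $\C_v$ to Berkovich points is harmless but also unnecessary, since the ultrametric estimates used in the paper hold verbatim for the multiplicative seminorms $[\cdot]_\l$, not just for classical absolute values.
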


We already showed that $M_{\bfc,v}$ is a closed subset of the locally
compact space $\Aberk{\C_v}$, and thus in order to prove
Proposition~\ref{prop:bounded mandelbrot} we only need to show that
$M_{\bfc,v}$ is a bounded subset of $\Aberk{\C_v}$. 

\begin{lemma}
\label{M_a is bounded for archimedean v}
$M_{\bfc,v}$ is a bounded subset of  $\Aberk{\C_v}$.
\end{lemma}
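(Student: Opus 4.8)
The plan is to produce an explicit radius $R_v$ — depending only on $v$, on the coefficients $g_i$, and on $\bfc$ — such that $[f_{\bfc,n}(T)]_\l \to \infty$ as $n\to\infty$ whenever $|\l|_v > R_v$ (equivalently $[T]_\l > R_v$ for the Type~I/II/III point $\l$), so that no such $\l$ lies in $M_{\bfc,v}$. The point is that for $|\l|_v$ large, all the relevant quantities are governed by their leading terms in $z$.

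First I would invoke Lemma~\ref{lem:degree in l}: under the standing hypothesis \eqref{what we need for degree}, $f_{\bfc,n}(z)$ has degree $m q^{rn}$ with leading coefficient $C_m^{q^{rn}}$. The crude estimate $|f_{\bfc,n}(\l)|_v \le r_v(f_{\bfc,n}) \cdot |\l|_v^{\deg f_{\bfc,n}}$ is not by itself enough, but the recursion $f_{\bfc,n+1}(z) = \Phi_t(f_{\bfc,n}(z)) = t f_{\bfc,n}(z) + \sum_{i=1}^{r-1} g_i(z) f_{\bfc,n}(z)^{q^i} + f_{\bfc,n}(z)^{q^r}$ is. So the second step is: using \eqref{what we need for degree} choose $R_v \ge 1$ large enough (in terms of $|C_m|_v$, the $|g_i|$-coefficient data, and $|t|_v$) so that for $|\l|_v > R_v$ one has both $|f_{\bfc,0}(\l)|_v = |\bfc(\l)|_v \ge |C_m|_v |\l|_v^m > r_v(\Phi_t^\l)$ and, inductively, the pure-power term $f_{\bfc,n}(\l)^{q^r}$ strictly dominates every other term in the recursion — this is exactly where inequality \eqref{what we need for degree}, which says $m(q^r - q^i) > \deg(g_i)$, is used, since it forces $|f_{\bfc,n}(\l)|_v^{q^r - q^i} > |g_i(\l)|_v$ once $|\l|_v$ is large. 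Concretely this gives $|f_{\bfc,n+1}(\l)|_v = |f_{\bfc,n}(\l)|_v^{q^r}$ for all $n$, hence $\log|f_{\bfc,n}(\l)|_v = q^{rn}\log|f_{\bfc,0}(\l)|_v \to \infty$, so $\hhat_{\Phi^\l,v}(\bfc(\l)) = \log|\bfc(\l)|_v > 0$ and $\l\notin M_{\bfc,v}$.

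The third step is to pass from $\l\in\C_v$ to general points $\l\in\Aberk{\C_v}$: for a point $\zeta$ corresponding to a disk $D(c,\rho)$ with $|c|_v$ or $\rho$ exceeding $R_v$, evaluate $[f_{\bfc,n}(T)]_\zeta$ as the sup-norm over the disk and run the same leading-term domination argument (the sup-norm is still multiplicative and still bounded below by the leading term $|C_m|_v^{q^{rn}}\cdot\max(|c|_v,\rho)^{mq^{rn}}$), concluding $[f_{\bfc,n}(T)]_\zeta\to\infty$; Type~IV points are handled as limits of nested disks whose radii eventually exceed $R_v$. Therefore $M_{\bfc,v}\subseteq\{\zeta : [T]_\zeta \le R_v\} = \cD(0,R_v)$, which is a bounded set, completing the proof.

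The main obstacle I anticipate is bookkeeping in the inductive step: one must check that the single inequality \eqref{what we need for degree} simultaneously controls the competition between $f_{\bfc,n}^{q^r}$ and each $g_i(\l) f_{\bfc,n}^{q^i}$ uniformly in $n$, and that the threshold $R_v$ can be chosen independently of $n$ (not drifting upward with $n$). This is really just an ultrametric induction — at each stage $|f_{\bfc,n}(\l)|_v$ only grows, so the domination, once established at $n=0$, is self-propagating — but writing it cleanly requires fixing $R_v$ up front from the finitely many constraints coming from $i=0,\dots,r-1$ and from $\bfc$ itself. No archimedean/nonarchimedean case split is needed here since $K$ has characteristic $p$ and all places are nonarchimedean, which actually simplifies matters relative to \cite{Matt-Laura}.
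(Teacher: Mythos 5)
Your proposal is correct and follows essentially the same route as the paper's proof: both fix a threshold $M$ (your $R_v$) on $|\l|_v$, using inequality \eqref{what we need for degree} to ensure the leading $q^r$-power term of $\Phi_t^\l$ dominates at $\bfc(\l)$, and then propagate that domination through the iterates to force $|f_{\bfc,n}(\l)|_v\to\infty$. The paper compresses the ultrametric induction to one step and leaves the extension from classical points to general Berkovich points implicit, whereas you spell both out, but this is a difference in exposition rather than in method.
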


\begin{proof}
For each $i=0,\dots, r-1$, we let $D_i$ be the leading coefficient of $g_i$ (recall that $g_0=t$); also, let $d_i:=\deg(g_i)$. Using an argument identical as in deriving \eqref{definition N_v}, we see that there exists $M>1$ such that if $|\l|_v>M$, then 
$$|g_i(\l)|_v=|D_i|_v\cdot |\l|_v^{d_i}\text{ for each }i=0,\dots, r-1\text{, and}$$
$$|\bfc(\l)|_v=|C_m|_v\cdot |\l|_v^m>|\l|_v>1.$$
At the expense of replacing $M$ with a larger number we may assume that
$$M^{m(q^r-q^i)-d_i}\ge |D_i|_v\cdot |C_m|_v^{q^i-q^r}\text{ for each }i=0,\dots,r-1.$$
Note that we may achieve the above inequality for large $M$ since (by our assumption \eqref{what we need for degree}), $m>d_i/(q^r-q^i)$ for each $i=0,\dots,r-1$. Therefore, if $|\l|_v>M$, then
$$|\bfc(\l)|_v^{q^r}>\left|g_i(\l)\cdot \bfc(\l)^{q^i}\right|_v\text{ for each } i=0,\dots, r-1.$$
Hence, 
$$|\Phi^\l_t(\bfc(\l))|_v=|\bfc(\l)|_v^{q^r}>|\bfc(\l)|_v>|\l|_v>1.$$
This allows us to conclude that if $|\l|_v>M$, then
$|\Phi^\l_{t^n}(\bfc(\l)|_v\to\infty$ as $n\to\infty$. Thus $\l\notin
M_{\bfc,v}$ if $|\l|_v>M$. 
\end{proof}

 Next our goal is to compute
 the logarithmic capacities of the $v$-adic generalized 
 Mandelbrot sets $M_{\bfc,v}$ associated to $\bfc$ for the given
 family of Drinfeld module $\Phi.$

\begin{thm}
  \label{mandelbrot capacity}
The logarithmic capacity of
$M_{\bfc,v}$ is $\g(M_{\bfc,v}) = |C_m|_v^{-1/m} $. 
\end{thm}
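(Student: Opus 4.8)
The plan is to construct a candidate Green's function for $M_{\bfc,v}$ explicitly as a limit of the naturally scaled potential functions attached to the polynomials $f_{\bfc,n}$, verify it satisfies the characterization in Lemma~\ref{green function}(3), and read off the capacity from its asymptotic expansion at infinity. Concretely, for each $n$ set
$$G_n(\l) := \frac{\log\left[f_{\bfc,n}(T)\right]_\l}{q^{rn}},$$
a continuous subharmonic function on $\Aberk{\C_v}$ (it is the scaled logarithm of the absolute value of a polynomial of degree $m q^{rn}$ in $\l$, by Lemma~\ref{lem:degree in l}). First I would show the sequence $G_n$ converges uniformly on $\Aberk{\C_v}$ to a continuous function $G$; the key estimate is that $f_{\bfc,n+1} = \Phi_t^\l(f_{\bfc,n})$, so that near the leading behaviour $[f_{\bfc,n+1}]_\l \approx [f_{\bfc,n}]_\l^{q^r}$, giving $|G_{n+1} - G_n| = O(q^{-rn})$ once one controls the lower-order terms $g_i(\l) f_{\bfc,n}^{q^i}$ against $f_{\bfc,n}^{q^r}$ uniformly in $\l$ over the relevant region. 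This uniform-Cauchy argument is essentially the telescoping estimate used for local canonical heights, but carried out over the whole Berkovich line rather than pointwise. By construction $G(\l) = \hhat_{\Phi^\l,v}(\bfc(\l))$ on $\C_v$, hence $G$ vanishes precisely on $M_{\bfc,v}$.

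Next I would establish the three hypotheses of Lemma~\ref{green function}(3) for $G$ relative to $E = M_{\bfc,v}$. Continuity and subharmonicity come from the uniform limit of continuous subharmonic $G_n$; harmonicity on the unbounded component $U$ of $\Aberk{\C_v}\setminus M_{\bfc,v}$ follows because each $G_n$ is harmonic wherever $f_{\bfc,n}$ does not vanish, and on $U$ the limit $G$ is positive, so one can pass harmonicity to the limit on that region (using that the zeros of the $f_{\bfc,n}$ lie in $M_{\bfc,v}$, as any such zero $\l$ has $\bfc(\l)$ eventually mapping to $0$ and hence bounded orbit). That $G$ is identically zero on $M_{\bfc,v}$ is immediate from the characterization $\l\in M_{\bfc,v}\iff \hhat_{\Phi^\l,v}(\bfc(\l))=0$. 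For the growth condition, by Lemma~\ref{M_a is bounded for archimedean v} there is $M>1$ with $|\bfc(\l)|_v = |C_m|_v |\l|_v^m > |\l|_v$ and $|\Phi^\l_t(\bfc(\l))|_v = |\bfc(\l)|_v^{q^r}$ for $|\l|_v > M$; iterating, $|\Phi^\l_{t^n}(\bfc(\l))|_v = |\bfc(\l)|_v^{q^{rn}} = \left(|C_m|_v |\l|_v^m\right)^{q^{rn}}$, so
$$G(\l) = \lim_{n\to\infty} \frac{\log |\Phi^\l_{t^n}(\bfc(\l))|_v}{q^{rn}} = \log|C_m|_v + m\log|\l|_v$$
for all $\l$ with $|\l|_v > M$. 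In particular $G(\l) - \log^+|\l|_v$ is bounded, so Lemma~\ref{green function}(3) gives $G = G_{M_{\bfc,v}}$.

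Finally, to extract the capacity I would rescale to normalize the leading coefficient: the degree-$m$ polynomial $\bfc$ has $G(\l) = \log|\l|_v + \left(\log|C_m|_v + (m-1)\log|\l|_v\right) + \dots$, which is not in the exact normal form $\log|\l|_v + V + o(1)$ demanded by the last sentence of Lemma~\ref{green function}(3) because $m$ need not equal $1$. The clean fix is to observe $\frac{1}{m}G(\l) = \log|\l|_v + \frac{1}{m}\log|C_m|_v + o(1)$ as $|\l|_v\to\infty$, and that $\frac1m G$ is the Green's function of $M_{\bfc,v}$ relative to infinity up to the standard degree-$m$ normalization; more carefully, one applies the functional-equation/scaling property of Green's functions, or simply notes $G = G_{M_{\bfc,v}}$ and reads $V(M_{\bfc,v})$ off the expansion $G(\l) = \log|\l|_v + V(M_{\bfc,v}) + o(1)$ after accounting for the degree — yielding $V(M_{\bfc,v}) = \frac{1}{m}\log|C_m|_v$ and hence $\g(M_{\bfc,v}) = e^{-V(M_{\bfc,v})} = |C_m|_v^{-1/m}$.

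The main obstacle I anticipate is the first step: proving that $G_n \to G$ uniformly on all of $\Aberk{\C_v}$, not merely pointwise on $\C_v$. Pointwise convergence on $\C_v$ is the definition of the local canonical height, but to invoke Lemma~\ref{green function}(3) one needs $G$ genuinely continuous and subharmonic on the Berkovich line, which requires the convergence to be locally uniform there. The delicate region is the boundary of $M_{\bfc,v}$, where $[f_{\bfc,n}]_\l$ is close to $1$ and the crude comparison $[f_{\bfc,n+1}]_\l\approx [f_{\bfc,n}]_\l^{q^r}$ can fail; one must split into the regime where $[f_{\bfc,n}]_\l$ is large (telescoping estimate works, giving geometric convergence) and the regime where it is bounded (there $G_n$ and the limit are both near $0$, controlled directly), and patch these uniformly. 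This is exactly the kind of argument carried out in~\cite[Sect.~4--6]{prep} and~\cite{Matt-Laura}, and the positive-characteristic setting does not change its structure here since we are only using absolute values, but getting the uniformity clean across the transition region is the technical heart of the proof.
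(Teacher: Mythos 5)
Your approach is the same as the paper's: define a candidate Green's function as a scaled limit of the potential functions of $f_{\bfc,n}$, verify the hypotheses of Lemma~\ref{green function}(3), and read off $V(M_{\bfc,v})$ from the asymptotic at infinity. This is exactly what the paper does via Lemma~\ref{green's function for mandelbrot}, using the Branner--Hubbard style uniform-convergence estimate together with \cite[Prop.~8.26(c)]{Baker-Rumely} for continuity and subharmonicity of the limit.

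Two slips in your write-up need fixing, and they are precisely what the paper's normalization avoids. First, you set $G_n(\l) := \log[f_{\bfc,n}(T)]_\l/q^{rn}$ with a bare $\log$ and call it continuous; but $[f_{\bfc,n}(T)]_\l = 0$ at every Type~I point $\l$ which is a zero of $f_{\bfc,n}$ (these exist, and they lie in $M_{\bfc,v}$), so $G_n(\l) = -\infty$ there, and since $0$ is fixed under $\Phi^\l_t$ the limit $G$ would also be $-\infty$ at such $\l$ rather than $0$. One must use $\log^+$, as in \eqref{mandelbrot green function}; indeed your own identification $G(\l) = \hhat_{\Phi^\l,v}(\bfc(\l))$ already presupposes it. Second, dividing by $q^{rn}$ instead of $\deg(f_{\bfc,n}) = mq^{rn}$ gives $G(\l) = m\log|\l|_v + \log|C_m|_v$ near infinity, so $G - \log^+|\cdot|_v$ is unbounded when $m>1$ and $G$ itself does \emph{not} satisfy Lemma~\ref{green function}(3); the alternative you mention ("simply note $G = G_{M_{\bfc,v}}$") is therefore wrong, and only your "clean fix" $\tfrac1m G$ is the Green's function. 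The paper builds both corrections in from the start by taking $G_{\bfc,v}(\l) = \lim_n \log^+[f_{\bfc,n}(T)]_\l/\deg(f_{\bfc,n})$, which immediately yields $G_{\bfc,v}(\l) = \log|\l|_v + \tfrac1m\log|C_m|_v$ at infinity and hence $\g(M_{\bfc,v}) = |C_m|_v^{-1/m}$.
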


The strategy for the proof of Theorem~\ref{mandelbrot capacity} is to
construct a continuous subharmonic function $G_{c,v} : \Aberk{\C_v}\to \R$
satisfying Lemma~\ref{green function}~(3).  
We let 

\begin{equation}
  \label{mandelbrot green function}
G_{\bfc,v}(\l) := \lim_{n\to \infty}\, \frac{1}{\deg(f_{\bfc,n})} \log^+ [f_{\bfc,n}(T)]_\l. 
\end{equation}
Then by a similar reasoning as in the proof of
\cite[Prop.~3.7]{Matt-Laura}, it can be shown that the limit exists
for all $\l\in \Aberk{\C_v}$.  
In fact, by the definition of canonical local height, for $\l\in \C_v$
we have
\begin{align*}
  G_{\bfc,v}(\l) & = \lim_{n\to \infty}\, \frac{1}{mq^{rn}}\log^+ |f_\l^{n}(\bfc(\l))|_v \quad \text{since 
  $\deg(f_{\bfc,n}) = m q^{rn}$ by Lemma~\ref{lem:degree in l}, } \\
&  = \frac{1}{m}\cdot \hhat_{\Phi^{\l},v}(\bfc(\l)) \quad \text{by the definition of canonical local height.}
\end{align*}

Note that $G_{\bfc,v}(\l) \ge 0$ for all $\l\in \Aberk{\C_v}.$
Moreover, by definition 
we see that $\l\in M_{\bfc,v}$ if and only if $G_{\bfc,v}(\l) = 0.$ 

\begin{lemma}
\label{green's function for mandelbrot}
$G_{\bfc,v}$ is the Green's function for $M_{\bfc,v}$ relative to $\infty.$ 
\end{lemma}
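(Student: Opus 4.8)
The plan is to verify that $G_{\bfc,v}$ satisfies the three defining properties of the Green's function of $M_{\bfc,v}$, appealing to Lemma~\ref{green function}~(3). First I would check that $G_{\bfc,v}$ is continuous and subharmonic on $\Aberk{\C_v}$. This follows from the fact that $G_{\bfc,v}$ is a locally uniform limit of the functions $\frac{1}{\deg(f_{\bfc,n})}\log^+[f_{\bfc,n}(T)]_\l$, each of which is continuous and subharmonic (being $\frac{1}{\deg(f_{\bfc,n})}$ times the $\log^+$ of the absolute value of a polynomial in $\l$, since the coefficients of $f_{\bfc,n}$ lie in $K\subset\C_v$); the locally uniform convergence is exactly the telescoping estimate used in \cite[Prop.~3.7]{Matt-Laura}, driven by the fact that away from $M_{\bfc,v}$ one orbit step multiplies the relevant norm by the $q^r$-th power (as in the proof of Lemma~\ref{M_a is bounded for archimedean v}), so the increments $\frac{1}{\deg(f_{\bfc,n+1})}\log^+[f_{\bfc,n+1}]_\l - \frac{1}{\deg(f_{\bfc,n})}\log^+[f_{\bfc,n}]_\l$ are bounded by a geometric series.

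Next I would identify where $G_{\bfc,v}$ vanishes and where it is harmonic. By the remark immediately preceding the lemma, $\l\in M_{\bfc,v}$ if and only if $G_{\bfc,v}(\l)=0$, so $G_{\bfc,v}$ is identically zero on $M_{\bfc,v}$ and strictly positive off it. Harmonicity of $G_{\bfc,v}$ on the complement $\Aberk{\C_v}\setminus M_{\bfc,v}$ — and in particular on the unbounded component $U$ — I would get from the fact that on any region where $\sup_n[f_{\bfc,n}]_\l=\infty$ the function is, locally, a limit of the harmonic functions $\frac{1}{\deg(f_{\bfc,n})}\log[f_{\bfc,n}(T)]_\l$ (the $\log^+$ becomes an honest $\log$ once $[f_{\bfc,n}]_\l>1$, which happens for all large $n$ locally uniformly off $M_{\bfc,v}$), and a locally uniform limit of harmonic functions is harmonic.

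Then I would pin down the behavior at $\infty$. For $|\l|_v$ large, Lemma~\ref{lem:degree in l} gives $\deg(f_{\bfc,n})=mq^{rn}$ with leading coefficient $C_m^{q^{rn}}$, and the computation in the proof of Lemma~\ref{M_a is bounded for archimedean v} shows $[f_{\bfc,n}(T)]_\l=|C_m|_v^{q^{rn}}\,|\l|_v^{mq^{rn}}$ for $|\l|_v$ sufficiently large (independently of $n$, since each $\Phi_t^\l$-step just raises to the $q^r$). Hence for such $\l$,
\[
G_{\bfc,v}(\l)=\frac{1}{mq^{rn}}\log\!\left(|C_m|_v^{q^{rn}}|\l|_v^{mq^{rn}}\right)=\log|\l|_v+\frac{1}{m}\log|C_m|_v,
\]
so $G_{\bfc,v}(\l)-\log^+|\l|_v$ is bounded, and in fact $G_{\bfc,v}(\l)=\log|\l|_v+V+o(1)$ with $V=\frac{1}{m}\log|C_m|_v$. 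Applying Lemma~\ref{green function}~(3) now yields both that $G_{\bfc,v}=G_{M_{\bfc,v}}$ and that $V(M_{\bfc,v})=\frac{1}{m}\log|C_m|_v$, hence $\g(M_{\bfc,v})=e^{-V}=|C_m|_v^{-1/m}$ — which simultaneously proves Theorem~\ref{mandelbrot capacity}. The main obstacle is the careful justification of the locally uniform convergence in \eqref{mandelbrot green function} and the attendant subharmonicity/harmonicity claims on Berkovich space; the quantitative input is precisely the dominance estimate $|\Phi_t^\l(x)|_v=|x|_v^{q^r}$ once $|x|_v$ exceeds a threshold depending continuously on $\l$, which is the same mechanism already exploited in Lemma~\ref{M_a is bounded for archimedean v}, so the difficulty is bookkeeping rather than a genuinely new idea.
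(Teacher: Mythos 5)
Your proposal is correct and follows essentially the same route as the paper: locally uniform convergence of $\frac{1}{\deg(f_{\bfc,n})}\log^+[f_{\bfc,n}(T)]_\l$ gives continuity and subharmonicity, the explicit computation at large $|\l|_v$ pins down the asymptotic $\log|\l|_v + \frac{1}{m}\log|C_m|_v$, and Lemma~\ref{green function}~(3) then identifies $G_{\bfc,v}$ with the Green's function. You are in fact slightly more explicit than the paper's sketch in spelling out harmonicity off $M_{\bfc,v}$ (where $\log^+$ becomes $\log$ and limits of harmonic functions are harmonic), which the paper leaves implicit in its appeal to \cite[Prop.~3.7]{Matt-Laura}.
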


The proof is essentially the same as the proof of
\cite[Prop.~3.7]{Matt-Laura}, we simply give a sketch of the idea. 

\begin{proof}[Proof of Lemma~\ref{green's function for mandelbrot}.]
So, using the same argument as in the proof of
  \cite[Prop.~1.2]{BH88}, we observe that as a function of $\l,$ 
the
  function $\displaystyle \frac{\log^+[f_{\bfc,n}(T)]_\l}{\deg(f_{\bfc,n})}$
  converges uniformly on  compact subsets of $\Aberk{\C_v}$.
So, the function $\displaystyle \frac{\log^+[f_{\bfc,n}(T)]_\l}{\deg(f_{\bfc,n})}$
is a continuous subharmonic function on $\Aberk{\C_v}$,
which converges to $G_{\bfc,v}$ uniformly; hence it follows from
  \cite[Prop.~8.26(c)]{Baker-Rumely} that $G_{\bfc,v}$ is continuous and
  subharmonic on $\Aberk{\C_v}.$  Furthermore, as remarked above,
  $G_{\bfc,v}$ is zero on $M_{\bfc,v}.$ 

Arguing as in the proof of Lemma~\ref{M_a is bounded for archimedean v}, if $|\lambda|_v$ is sufficiently large, then for $n\ge 1$ we have
$$|f_{\bfc,n}(\l)|_v=|\Phi^{\lambda}_{t^n}(\bfc(\l))|_v=|C_m\lambda^{m}|_v^{q^{rn}}.$$
Hence, for $|\lambda|_v$ sufficiently large we have
  \begin{align*}
    G_{\bfc,v}(\l) & = \lim_{n\to \infty}\, \frac{1}{mq^{rn}} \log
    |f_{\bfc,n}(\l)|_v
    \\
     & = \log|\l|_v + \frac{\log |C_m|_v}{m}.
  \end{align*}
  It follows from Lemma~\ref{green function}~(3), that $G_{\bfc,v}$ is indeed
  the Green's function of $M_{\bfc,v}.$ 
\end{proof}

Now we are ready to prove Theorem~\ref{mandelbrot capacity}.
\begin{proof}[Proof of  Theorem~\ref{mandelbrot capacity}.]
As in the proof of Lemma~\ref{green's function for mandelbrot}, we
have
\[
 G_{\bfc,v}(\l)  = \log|\l|_v + \frac{\log |C_m|_v}{m} 
\]
for $|\l|_v$ sufficiently large. By Lemma~\ref{green
    function}~(3), we find that $V(M_{\bfc,v}) = \frac{\log |C_m|_v}{m}$. Hence,
  the logarithmic capacity of $M_{\bfc,v}$ is
  \[
  \g(M_{\bfc,v}) = e^{-V(M_{\bfc,v})} = \frac{1}{|C_m|^{1/m}_v}
\]
as desired. 
\end{proof}

Let $\M_{\bfc} = \prod_{v\in
\Omega} M_{\bfc,v}$ be the generalized ad\`elic Mandelbrot set associated to
$c$. As a corollary to Theorem~\ref{mandelbrot capacity} we see that
$\M_{\bfc}$ satisfies the hypothesis of Theorem~\ref{thm:equidistribution}.
\begin{cor}
 \label{adelic mandelbrot}
For all but finitely many nonarchimedean places $v$, we have that $M_{\bfc,v}$ is the closed unit disk $\cD(0;1)$ in $\Aberk{\C_v}$; furthermore $\g(\M_{\bfc}) = 1$.
\end{cor}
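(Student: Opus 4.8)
The plan is to show that for all but finitely many $v\in\Omega_K$ the leading coefficient $C_m$ of $\bfc$ is a $v$-adic unit and all the coefficients of $\Phi_t$ (i.e.\ the polynomials $g_i$) together with $\bfc$ have $v$-adically integral coefficients, and then to argue directly from the dynamics that $M_{\bfc,v}$ coincides with the closed unit disk $\cD(0;1)$. Concretely, $\bfc\in K[z]$ has finitely many coefficients, each lying in $K$; by the product formula each nonzero coefficient is a $v$-adic unit for all but finitely many $v$, and in particular $|C_m|_v=1$ outside a finite set $S_0\subset\Omega_K$. Similarly, each $g_i\in K[z]$ (for $i=0,\dots,r-1$, with $g_0=t$) has all coefficients of $v$-adic absolute value $\le 1$, and the leading coefficient of the $x^{q^r}$-term is $1$, outside a finite set $S_1$. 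Enlarging the finite exceptional set to $S:=S_0\cup S_1$, fix any $v\notin S$.

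Next I would check both inclusions. For $\l$ with $|\l|_v\le 1$: since all $g_i(\l)$ and $\bfc(\l)$ are $v$-adic integers (in fact $|\bfc(\l)|_v\le 1$ because $\bfc$ has integral coefficients, using the nonarchimedean triangle inequality), an easy induction shows $|\Phi^\l_{t^n}(\bfc(\l))|_v\le 1$ for all $n$, because $\Phi^\l_t$ has $v$-integral coefficients and maps the closed unit disk into itself. Hence $\l\in M_{\bfc,v}$, so $\cD(0;1)\subseteq M_{\bfc,v}$. Conversely, I claim $M_{\bfc,v}\subseteq\cD(0;1)$: if $|\l|_v>1$, then since $|C_m|_v=1$ and $|\l|_v>1$ we get $|\bfc(\l)|_v=|\l|_v^m>1$, and, because $m>\max_i \deg(g_i)/(q^r-q^i)$ and $|D_i|_v\le 1$ for the leading coefficients $D_i$ of the $g_i$, the computation in the proof of Lemma~\ref{M_a is bounded for archimedean v} goes through with $M=1$: one verifies $|\bfc(\l)|_v^{q^r}>|g_i(\l)\bfc(\l)^{q^i}|_v$ for each $i$, hence $|\Phi^\l_t(\bfc(\l))|_v=|\bfc(\l)|_v^{q^r}>|\bfc(\l)|_v>1$, and iterating gives $|\Phi^\l_{t^n}(\bfc(\l))|_v\to\infty$, so $\l\notin M_{\bfc,v}$. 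This is genuinely the same argument already carried out, merely with the observation that the constant $M$ can be taken equal to $1$ once $v$ is outside the finite bad set. Since $M_{\bfc,v}$ is a closed subset of $\Aberk{\C_v}$ and contains and is contained in $\cD(0;1)$ at the level of $\C_v$-points, and $\cD(0;1)$ is itself closed with $\C_v$ dense in it, we conclude $M_{\bfc,v}=\cD(0;1)$ for all $v\notin S$.

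Finally, for the capacity statement: by Theorem~\ref{mandelbrot capacity}, $\g(M_{\bfc,v})=|C_m|_v^{-1/m}$ for every $v$, and for $v\notin S$ this equals $1$ since $|C_m|_v=1$ (consistent with $\g(\cD(0;1))=1$). Therefore
$$\g(\M_\bfc)=\prod_{v\in\Omega}\g(M_{\bfc,v})^{N_v}=\prod_{v\in S}|C_m|_v^{-N_v/m}=\left(\prod_{v\in\Omega}|C_m|_v^{N_v}\right)^{-1/m}=1,$$
where the last equality is the product formula applied to $C_m\in K^\ast$ (the factors with $v\notin S$ are all $1$). This shows $\M_\bfc$ is a compact Berkovich adélic set of capacity $1$, so it satisfies the hypotheses of Theorem~\ref{thm:equidistribution}. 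The only mild obstacle is bookkeeping the finite exceptional set and confirming that the bound $M$ from Lemma~\ref{M_a is bounded for archimedean v} degenerates to $1$ precisely when $v$ avoids the primes dividing the relevant coefficients; there is no real difficulty, just care with the nonarchimedean estimates.
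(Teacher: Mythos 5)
Your proposal is correct and follows essentially the same strategy as the paper: identify the finite bad set of places via $v$-integrality of the coefficients of $\bfc$ and the $g_i$ together with $|C_m|_v=1$, check both inclusions for the remaining $v$, and deduce $\g(\M_\bfc)=1$ from the product formula. One small stylistic difference: for the inclusion $M_{\bfc,v}\subseteq\cD(0;1)$ the paper simply observes that each $f_{\bfc,n}(z)$ has $v$-integral coefficients and unit leading coefficient, so by Lemma~\ref{lem:degree in l} its value at any Berkovich point with $|\l|_v>1$ is $|\l|_v^{mq^{rn}}\to\infty$; you instead rerun the term-by-term estimate from Lemma~\ref{M_a is bounded for archimedean v} with $M=1$, which amounts to the same thing. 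The one caveat is the closing sentence of your converse argument, where you phrase the conclusion as an agreement of $\C_v$-points plus density and closedness; that implication is not valid in general in the Berkovich setting (two distinct closed Berkovich sets can share the same $\C_v$-points), but it is also unnecessary here: both of your estimates apply verbatim to arbitrary seminorms $[\cdot]_\l$, so the inclusions hold directly on $\Aberk{\C_v}$ and the detour through dense $\C_v$-points should simply be dropped.
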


\begin{proof}
For each place $v$ where all coefficients of $g_i(z), i=0,
\ldots,r-1$ and of $\bfc(z)$ are
$v$-adic integral, and moreover $|C_m|_v=1$, we have that
$M_{\bfc,v}=\cD(0,1)$. Indeed, $\cD(0,1)\subset M_{\bfc,v}$ since then
$\Phi^{\lambda}_{t^n}(\bfc(\l))$ is always a $v$-adic integer. For the converse
implication we note that each coefficient of $f_{\bfc,n}(z)$ is a
$v$-adic integer, while the leading coefficient is a $v$-adic unit for
all $n\ge 1$; thus
$|f_{\bfc,n}(\lambda)|_v=|\lambda|_v^{mq^{rn}}\to\infty$ if
$|\lambda|_v>1$. Note that $C_m\ne 0$ and so, the second assertion in
Corollary~\ref{adelic mandelbrot} follows immediately by the product formula in $K$.
\end{proof} 

Using  the decomposition of the global canonical height as a sum of local canonical heights we obtain the following result.
\begin{cor}
\label{second important remark}
Let $\l\in\Kbar$, let $S$ be the set of $\Gal(\Kbar/K)$-conjugates of $\l$, and let $h_{\M_{\bfc}}$ be defined as in \eqref{def ade hig}. Then $\deg(\bfc)\cdot h_{\M_{\bfc}}(S)=\hhat_{\Phi^{\l}}(\bfc(\l))$.
\end{cor}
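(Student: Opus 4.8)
The plan is to unwind the definitions on both sides and reduce the identity to the global decomposition of the canonical height into local heights. First I would recall that for $\l\in\Kbar$ with $S=\{\l_1,\dots,\l_d\}$ the full set of $\Gal(\Kbar/K)$-conjugates of $\l$, the definition \eqref{def ade hig} gives
$$h_{\M_{\bfc}}(S)=\sum_{v\in\Omega_K}N_v\left(\frac{1}{d}\sum_{i=1}^d G_{\bfc,v}(\l_i)\right),$$
where, by Lemma~\ref{green's function for mandelbrot} (and the computation preceding it), for each finite place $v$ and each conjugate $\l_i$ we have $G_{\bfc,v}(\l_i)=\frac{1}{m}\hhat_{\Phi^{\l_i},v}(\bfc(\l_i))$ with $m=\deg(\bfc)$. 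Multiplying by $m=\deg(\bfc)$ and swapping the order of summation yields
$$\deg(\bfc)\cdot h_{\M_{\bfc}}(S)=\frac{1}{d}\sum_{i=1}^d\sum_{v\in\Omega_K}N_v\,\hhat_{\Phi^{\l_i},v}(\bfc(\l_i)).$$

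Next I would invoke the fact, recalled in Section~\ref{subsect:canonical height} (from \cite{Poonen,Wang}), that the global canonical height decomposes as a weighted sum of the local canonical heights over all places; concretely, for a fixed $\l_i\in\Kbar$, $\hhat_{\Phi^{\l_i}}(\bfc(\l_i))=\sum_{v}N_v\,\hhat_{\Phi^{\l_i},v}(\bfc(\l_i))$, once one is careful about the normalization of absolute values on the various completions and the places of the field of definition of $\l_i$ lying over each $v\in\Omega_K$. Thus the inner sum over $v$ equals $\hhat_{\Phi^{\l_i}}(\bfc(\l_i))$. Finally, since $\bfc(\l_i)$ runs over the $\Gal(\Kbar/K)$-orbit of $\bfc(\l)$ and the canonical height is Galois-invariant, $\hhat_{\Phi^{\l_i}}(\bfc(\l_i))=\hhat_{\Phi^{\l}}(\bfc(\l))$ for every $i$, so the average over $i$ collapses to $\hhat_{\Phi^{\l}}(\bfc(\l))$, giving the claimed identity.

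The main obstacle — really the only subtlety — is bookkeeping with the normalizations: one must check that the weights $N_v$ appearing in \eqref{def ade hig} and in the adelic capacity $\g(\M_{\bfc})=\prod_v\g(M_{\bfc,v})^{N_v}$ are exactly the ones for which $\sum_v N_v\log|\a|_v=0$ holds for $\a\in K^{\ast}$, and that summing the local heights $\hhat_{\Phi^{\l_i},v}$ (defined via the chosen embeddings $\Kbar\into\C_v$) against these same weights reproduces the global height $\hhat_{\Phi^{\l_i}}$ as normalized via the Weil height on $\Kbar$; this is precisely the content of the height decomposition results of Poonen and Wang combined with the product-formula setup of Section~\ref{subsect:canonical height}, so no new argument is needed, only a consistent choice of normalizations. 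A minor point to state explicitly is that, by the corollary to Lemma~\ref{lem:degree in l}, any $\l$ with $\bfc(\l)\in\Phi^\l_{\tor}$ automatically lies in $\Kbar$, so the set $S$ is finite and the formula makes sense.
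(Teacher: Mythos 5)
Your overall strategy matches what the paper intends (it offers only the one-line remark ``Using the decomposition of the global canonical height as a sum of local canonical heights''), and the reduction of $G_{\bfc,v}$ to $\tfrac{1}{m}\hhat_{\Phi^{\l},v}(\bfc(\l))$ is exactly the right input. However, there is a genuine flaw in the middle of your argument: the per-conjugate identity
\[
\hhat_{\Phi^{\l_i}}(\bfc(\l_i))\;=\;\sum_{v\in\Omega_K}N_v\,\hhat_{\Phi^{\l_i},v}(\bfc(\l_i))
\]
is generally \emph{false} for a fixed $i$ when $\l_i\notin K$. The reason is that for each $v\in\Omega_K$ the paper fixes a single embedding $\Kbar\into\C_v$, so the term $\hhat_{\Phi^{\l_i},v}(\bfc(\l_i))$ sees only \emph{one} place $w\mid v$ of the field $K(\l_i)$, not all of them. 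The global canonical height of $\bfc(\l_i)$, on the other hand, is a weighted sum of local heights over \emph{all} places of $K(\l_i)$, and the local contributions $\hhat_{\Phi^{\l_i},w}(\bfc(\l_i))$ for distinct $w\mid v$ are in general different. So the inner sum over $v\in\Omega_K$ for a fixed $i$ is not $\hhat_{\Phi^{\l_i}}(\bfc(\l_i))$ --- in fact, it is not even independent of the chosen embeddings.

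The averaging over $i$ is therefore doing real work, not merely collapsing identical terms. The correct mechanism is to swap the sums and, for each fixed $v\in\Omega_K$, observe that as $\l_i$ runs through the full $\Gal(\Kbar/K)$-orbit $S$, the images $\iota_v(\l_i)\in\C_v$ sweep out \emph{all} places $w$ of $K(\l)$ above $v$, each with multiplicity the local degree, so that
\[
\frac{1}{|S|}\sum_{i} N_v\,\hhat_{\Phi^{\l_i},v}(\bfc(\l_i))\;=\;\sum_{w\mid v}N_w\,\hhat_{\Phi^{\l},w}(\bfc(\l))
\]
with the $N_w$ normalized compatibly with the $N_v$. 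Summing over $v$ then recovers the genuine decomposition of $\hhat_{\Phi^{\l}}(\bfc(\l))$ from \cite{Poonen,Wang}. This is exactly why the paper remarks just after \eqref{def ade hig} that $h_{\E}(S)$ is independent of the embeddings $\Kbar\into\C_v$: the independence holds for the Galois-orbit average, \emph{not} for a single conjugate, and your Step 1 implicitly assumes the stronger (false) single-conjugate version. Once you route the argument through the orbit-average identity instead, the rest of your write-up is correct.
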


\begin{remark}
  \label{remark:second important remark}
Corollary~\ref{second important remark} is a result on specialization
of height functions relating the canonical heights of $\bfc(\l)$ in the
family and the height of the parameter $\l.$ As a consequence of
Corollary~\ref{second important remark}, we find that $\bfc(\l)$ is a
torsion point for $\Phi^\l$ if and only if $h_{\M_{\bfc}}(S) = 0.$ 

By abuse of notation we will use the notation $h_{\M_{\bfc}}(\l):=h_{\M_{\bfc}}(S)$ where $S$ is the $\Gal(\Kbar/K)$-orbit of $\l$.  
\end{remark}

%#######################################################################

\section{Proof of Theorem~\ref{main result}}
\label{proof of our main result}

%#######################################################################

We work under the hypothesis of Theorem~\ref{main result}, and we continue with the notation from the previous Sections. 

Recall that 
$\Phi_t(x)= tx + \sum_{i=0}^{r-1} g_i(z) x^{q^i}+x^{q^r}$ where we require that
$g_i\in K[z]$ for $i = 1, \ldots, r-1.$ Let $\bfa,
\bfb \in K[z]$ satisfying the hypothesis \eqref{condition on degrees
  of starting points} of Theorem~\ref{main result}. In particular,
both $\bfa$ and $\bfb$ have positive degrees.   At the expense
of replacing $K$ with the extension of $\Fq(t)$ generated by all
coefficients of $g_i$, and of $\bfa$ and of $\bfb$, we may assume that
$K$ is finitely generated over $\Fq(t)$.  
Let $\Omega_K$ be the set of all
inequivalent absolute values on $K$ constructed as
in Subsection~\ref{subsect:canonical height}. 

Next, assume there exist infinitely many $\lambda$ such that
$\bfa(\l), \bfb(\l)\in \Phi_{\tor}^{\lambda}$.  As a consequence of
Lemma~\ref{lem:degree in l}, we have $\l\in\Kbar$.

Let $h_{\M_{\bfa}}(z)$  
($h_{\M_{\bfb}}(z)$) be the 
height of $z\in \Kbar$ relative to the adelic generalized Mandelbrot set
$\M_{\bfa} := \prod_{v\in \Omega_K} M_{\bfa,v}$ (respectively, $\M_{\bfb}$) 
defined as in Section~\ref{preliminaries} (see also Remark~\ref{remark:second important remark}). Note that if $\lambda \in \Kbar$
is a parameter such that $\bfa(\l)$ 
(and $\bfb(\l)$) is torsion for $\Phi^\l$ then
$h_{\M_{\bfa}}(\lambda)=0$ by Corollary~\ref{second important remark}.  
So, we may apply the
equidistribution result from Theorem~\ref{thm:equidistribution}
% \cite[Theorem 7.52]{Baker-Rumely}
% (see our Theorem~\ref{thm:equidistribution}) 
and conclude that $M_{\bfa,v}=M_{\bfb,v}$ for each place $v\in \Omega_K$.
Indeed, we know that there exists an infinite sequence
$\{\l_n\}_{n\in\N}$ of distinct numbers $\l\in\Kbar$ such that both
$\bfa(\l)$ and $\bfb(\l)$ are torsion points for $\Phi^\l$. So, for each $n\in\N$, we may
take $S_n$ be the union of the sets of Galois conjugates for $\l_m$
for all $1\le m\le n$. Clearly $\#S_n\to\infty$ as $n\to\infty$, and
also each $S_n$ is $\Gal(\Kbar/K)$-invariant. Finally,
$h_{\M_{\bfa}}(S_n)=h_{\M_{\bfb}}(S_n)=0$ for all $n\in\N$, and thus
Theorem~\ref{thm:equidistribution} applies in this case. We obtain
that $\mu_{\M_{\bfa}}=\mu_{\M_{\bfb}}$ and since they are both
supported on $\M_{\bfa}$ (resp. $\M_{\bfb}$), we also get that
$\M_{\bfa}=\M_{\bfb}$. Consequently, the two height functions
$h_{\M_{\bfa}}$ and $h_{\M_{\bfb}}$ are equal. 
Using Corollary~\ref{second important remark} again, we conclude that for each $\l\in\Kbar$ we have that $\hhat_{\Phi^\l}(\bfa(\l))=0$ if and only if $\hhat_{\Phi^\l}(\bfb(\l))=0$. Hence $\bfa(\l)\in\Phi^\l_{\tor}$ if and only if $\bfb(\l)\in\Phi^\l_{\tor}$.

Finally, knowing that $\M_{\bfa}=\M_{\bfb}$ we obtain that the capacities of the corresponding generalized Mandelbrot sets are equal to each other, i.e.,
$$|C_{\bfa}|_v^{\frac{1}{\deg(\bfa)}}=|C_\bfb|_v^{\frac{1}{\deg(\bfb)}}\text{ for each place }v.$$
We let $U:=C_\bfa^{\deg(\bfb)}/C_\bfb^{\deg(\bfa)}$. Then for each $v\in\Omega_K$ we know that $|U|_v=1$. Since the only elements of $K$ which are units for all places $v\in\Omega_K$ are the ones living in $\Fqbar$, we conclude that $U\in\Fqbar$, as desired.

%#######################################################################
%#######################################################################

\section{Proof of our main results}
\label{conclusion}

%#######################################################################

Theorem~\ref{nice main result} follows from Theorem~\ref{main result}.
\begin{proof}[Proof of Theorem~\ref{nice main result}.]
If either $\bfa$ or $\bfb$ is a torsion point for  $\Phi$, then we
are done. So, from now on, assume that neither $\bfa$ nor
$\bfb$ is torsion for  $\Phi$. 

% We let $L=K(z)$ be the rational function field in the variable $z$
% over $K$. 
We now consider the given Drinfeld module
$\Phi:\Fq[t]\lra\End_{L}(\bG_a)$ as a Drinfeld module over $L = K(z)$
the rational function field with constant field $K.$ 
We let  $\Omega_{L/K}$ be the set of all places of the
function field $L/K$, and let $\hhat_\Phi$ be the canonical height for
the Drinfeld module $\Phi$ over $L.$ 
There are two cases.

{\bf Case 1.} Each polynomial $g_i$ is constant.

In this case, we have that $\Phi_{\tor}^\l = \Phi_{\tor} \subset
\Kbar$. Since $\bfa$  and $\bfb$ are not torsion for $\Phi$, 
% since there exists $\l$ such that $\bfa(\l)\in\Phi^\l_{\tor}$, but
% it is not true that $\bfa(\l)\in\Phi^\l_{\tor}$ for \emph{all} $\l$,  
we conclude that $\bfa$ (and similarly $\bfb$) is not a constant
polynomial, otherwise $\bfa \notin \Phi_{\tor}^\l$
($\bfb\notin\Phi_{\tor}^\l$, respectively) for all $\l\in \Kbar.$ This
contradicts our hypothesis that there exist infinitely many 
$\l\in\Kbar$ such that $\bfa(\l)$ and $\bfb(\l)$ are torsion for
$\Phi^\l$. Therefore, according to \cite[Theorem 4.15 (a)]{mw2}, there
exists $w\in\Omega_{L/K}$ such that $\hhat_{\Phi,w}(\bfa)>0$ (a
similar statement holds for $\bfb$). 

{\bf Case 2.} There exists at least one polynomial $g_i$ which is not constant.

In this case, there exists at least one place $v\in\Omega_{L/K}$ such
that for some $i=1,\dots, r-1$, we have $|g_i|_v>1$ and thus
\cite[Theorem 4.15 (b)]{mw2} yields that there exists some place
$w\in\Omega_{L/K}$ such that  $\hhat_{\Phi,w}(\bfa)>0$. A similar
statement holds for $\bfb$ since we know that neither $\bfa$ nor
$\bfb$ is torsion for $\Phi$. 

Therefore we know that there exists some place $w\in\Omega_{L/K}$ such that $\hhat_{\Phi,w}(\bfa)>0$, i.e., 
\begin{equation}
\label{w going to infinity}
|\Phi_{t^n}(\bfa)|_w\to\infty\text{ as $n\to\infty$.}
\end{equation}  
However, since $\bfa\in K[z]$ and  each $g_i\in K[z]$ and also
$t\in K$ we obtain that for each place $v\in\Omega_{L/K}$
\emph{except} the place at infinity, the iterates
$\Phi_{t^n}(\bfa)$ are all $v$-integral. Hence it must be that for
the place $w$ at infinity of the function field $K(z)$ we have that
\eqref{w going to infinity} holds, i.e.
$$\deg_z\left(\Phi_{t^n}(\bfa)\right)\to\infty\text{ as $n\to\infty$.}$$
% or equivalently,
% $\deg_\l\left(\Phi^\l_{t^n}(\bfa(\l))\right)\to\infty$ as
% $n\to\infty$. 
A similar argument yields that also
$\deg_z\left(\Phi_{t^n}(\bfb)\right)\to\infty$ as
$n\to\infty$. Hence, at the expense of replacing both $\bfa$ and
$\bfb$ by $\Phi_{t^n}(\bfa)$ respectively $\Phi_{t^n}(\bfb)$
(for a sufficiently large integer $n$), we may achieve that inequality
\eqref{condition on degrees of starting points} is
satisfied. Moreover, note that for each $\l$, we have that $\bfa(\l)$
(or $\bfb(\l)$) is a torsion point for $\Phi^\l$ if and only if
$\Phi^\l_{t^n}(\bfa(\l))$ (respectively $\Phi^\l_{t^n}(\bfb(\l))$) is
a torsion point for $\Phi^\l$. Theorem~\ref{main result} yields that
for each $\l\in\Kbar$ we have that  $\Phi^\l_{t^n}(\bfa(\l))$ (and
thus $\bfa(\l)$) is a torsion point for $\Phi^\l$ if and only if
$\Phi^\l_{t^n}(\bfb(\l))$ (and thus $\bfb(\l)$) is a torsion point for
$\Phi^\l$.  
\end{proof}

%Theorem~\ref{nice corollary} also follows  from Theorem~\ref{main
%result}.
In the following, for a nonzero element $f\in \Fq[t]$ and a Drinfeld
module $\phi$ we denote the submodule of $f$-torsion  by
$\phi[f]$ as usual. 
\begin{proof}[Proof of Theorem~\ref{nice corollary}.]
If either $\bfa$ or $\bfb$ equals $0$, then clearly $\bfa$ (or $\bfb$) is always torsion and the conclusion is immediate. So, assume now that both $\bfa$ and $\bfb$ are nonzero.

Assume first that $\bfa$ and $\bfb$ are $\Fq$-linearly dependent.  We
note that for each positive integer $n$ we have then that
$\Phi_{t^n}(\bfa)$ is a  polynomial of degree $q^{rn}$ in $z$. On the
other hand, for any two distinct, monic, irreducible polynomials
$f,g\in\Fp[t]$ we have $\Phi[f]\cap \Phi[g] = \{0\}.$ 
% the only common solution $z$ to the equations
% $$\Phi_{f(t)}(z)=\Phi_{g(t)}(z)=0$$
% is $z=0$.
Therefore by solving $\Phi_{f}^\l(\bfa)=0$ for various distinct,
monic, irreducible polynomials $f\in\Fq[t]$ we find that there exist
infinitely many $\l\in\Kbar$ such that $\bfa\in\Phi^\l_{\tor}.$
% (simply solve $\Phi_{f(t)}^\l(\bfa)=0$ for various distinct, monic, irreducible polynomials $f(t)\in\Fp[t]$). 
On the other hand, for any $c\in \Fq$ and $a\in \Fq[t]$ we have that
$c\Phi_a^\l(x)=\Phi_a^\l(cx)$. From this, it is easy to see that for
each $\l\in\Kbar$ we have that $\bfa$ is a torsion point if and only
if $\bfb = c \bfa$ ($ c\in \Fq$) is a torsion point for $\Phi^\l$. % (note that $c:=\bfa/\bfb\in\Fq$ by our hypothesis and for each $a\in\Fp[t]$ we have that $c\Phi_a^\l(x)=\Phi_a^\l(cx)$).
So, indeed there exist infinitely many $\l\in\Kbar$ such that $\bfa,\bfb\in\Phi^\l_{\tor}$.

Now for the converse, we note that
$$\bfa_1:=\Phi_{t^2}(\bfa)=\Phi_t\left(\bfa^q\cdot z+\left(t\bfa + \bfa^{q^r}\right)\right)$$
and
$$\bfb_1:=\Phi_{t^2}(\bfb)=\Phi_t\left(\bfb^q\cdot z +\left(t\bfb+\bfb^{q^r}\right)\right)$$
are both  polynomials in $z$ of same degree $q^r>1$. Furthermore their leading coefficients are $\bfa^{q^{r+1}}$, respectively $\bfb^{q^{r+1}}$. Therefore inequality \eqref{condition on degrees of starting points} from Theorem~\ref{main result} is satisfied and thus we conclude that $\bfa/\bfb\in\Fpbar$. On the other hand we know that $\bfa,\bfb\in K$ and $\Fq$ is algebraically closed in $K$; hence $\bfa/\bfb\in\Fq$ as desired.
\end{proof}

Similarly to Theorem~\ref{nice corollary} we can prove Theorem~\ref{nice corollary 2}.
\begin{proof}[Proof of Theorem~\ref{nice corollary 2}.]
Let $s$ be a positive integer such that $\bfa,\bfb\in\Fqs(t)$ and let $\Omega_s:=\Omega_{\Fqs(t)}$. Arguing as in the proof of Theorem~\ref{nice corollary} we obtain that if $\bfa$ and $\bfb$ are $\Fq$-linearly dependent, then there exist infinitely many $\l\in\overline{\Fp(t)}$ such that $\bfa,\bfb\in\Phi^\l_{\tor}$. 

Assume now that there exist infinitely many $\l\in\overline{\Fq(t)}$
such that $\bfa,\bfb\in\Phi_{\tor}^\l$. In addition, we may assume
both $\bfa$ and $\bfb$ are nonzero. The proof of Theorem~\ref{nice
  corollary}  shows that $\bfa/\bfb\in\Fqbar$.
% Similarly to the proof of Theorem~\ref{nice corollary} we let
% $$\bfa_1(\l):=\Phi^\l_{t^2}(\bfa)=\Phi^\l_t\left(\bfa^{q}\cdot \l+\left(t\bfa + \bfa^{q^2}\right)\right)$$
% and
% $$\bfb_1(\l):=\Phi^\l_{t^2}(\bfb)=\Phi^\l_t\left(\bfb^{q}\cdot \l +\left(t\bfb+\bfb^{q^2}\right)\right).$$
% They are both polynomials of degree $q^2>1$ in $\l$; their leading coefficients are $\bfa^{q^{3}}$, respectively $\bfa^{q^{3}}$. Therefore inequality \eqref{condition on degrees of starting points} from Theorem~\ref{main result} is satisfied and thus we conclude that $\bfa/\bfb\in\Fpbar$.
In addition, Theorem~\ref{main result} yields that for each $\l\in\overline{\Fq(t)}$, we have that $\bfa\in\Phi^\l_{\tor}$ if and only if $\bfb\in\Phi^\l_{\tor}$. In order to finish the proof of Theorem~\ref{nice corollary 2} we will use both consequences of Theorem~\ref{main result} stated above.

We let $\gamma:=\bfb/\bfa\in\Fqs$, and assume $\g\notin\Fq$. Let
$\l_0\in \overline{\Fq(t)}$ such that $\Phi^{\l_0}_t(\bfa)=0$. Then, we have
\begin{equation}
\label{eq: lambda0} \l_0 \bfa^q = - t \bfa - \bfa^{q^r}. 
\end{equation}
% Then $\Phi^{\l_0}_t(\bfa)=0$; hence also $\bfb\in\Phi^{\l_0}_{\tor}$.
We will show that $\bfb\notin\Phi^{\l_0}_{\tor}$ which yields a
contradiction to the conclusion of Theorem~\ref{main result}.
Before we proceed, we note that 
\begin{align}
\Phi_t^{\l_0}(\bfb) & = \g t \bfa + \g^q \l_0 \bfa^q+ \g^{q^2}
\bfa^{q^2} \notag \\
  & = (\g - \g^q) t \bfa + (\g^{q^r} - \g^q) \bfa^{q^r}
  \;\text{by~\eqref{eq: lambda0}}
\label{eq: phi of b} .
  \end{align}
In the following, we denote by $|\cdot |_\infty$  the  absolute value corresponding  to the
unique place of $\Fqs(t)$ where $t$ is not integral.  
We split our analysis into two cases: 
\medskip
\\
{\bf Case 1.} $\bfa\in\Fqs$.

% In this case also $\bfb=c\cdot \bfa\in\Fqs$. Let $w\in\Omega_s$ be the unique place at which $t$ is not integral (this is the place at infinity from $\Omega_s$). Then $|\l_0|_w=|t|_w>1$; also, since by our assumption $c\notin\Fq$ we also have that 
% $$|ct\bfa-c^{q}t\bfa|_w=|t|_w>1.$$
% We get that
% $$|\Phi^{\l_0}_t(\bfb)|_w=|ct\bfa-c^{q}t\bfa-c^{q}\bfa^{q^2}+c^{q^2}\bfa^{q^2}|_w=|t|_w>1.$$
% An easy induction (also using \eqref{definition N_v}) yields that for each positive integer $n$ we have 
% $$\left|\Phi^{\l_0}_{t^n}(\bfb)\right|_w=|t|_w^{q^{2(n-1)}}>1\text{ and thus }\bfb\notin\Phi^{\l_0}_{\tor}.$$
% This contradiction yields that indeed $c\in\Fq$ in this case.
By~\eqref{eq: lambda0}  we have that $|\l_0|_\infty = |t|_\infty$ and
it follows from~\eqref{eq: phi of b} that 
$$
|\Phi_t^{\l_0}(\bfb) |_\infty = |(\g - \g^q) t \bfa + (\g^{q^r} - \g^q)
\bfa^{q^r} |_\infty = |t|_\infty > \max\{1, |t|_\infty^{1/(q^r - 1)},
|\l_0|_\infty^{1/(q^r - q)}\}. 
$$   
Using \eqref{definition N_v}, we conclude that
$\left|\Phi^{\l_0}_{t^n}(\bfb)\right|_\infty \to \infty$ as $n\to\infty$ and hence
$\bfb\not\in \Phi^{\l_0}_{\tor}$ as desired. 
\medskip
\\
{\bf Case 2.} $\bfa\notin\Fqs$.

In this case there exists a place $v\in\Omega_s$ such that
$|\bfa|_v>1$.  Note that it is not possible to  have $|t \bfa|_v \ge
|\bfa|^{q^r}$ for otherwise also  $|t|_v > 1$. This implies that 
$|\cdot|_v = |\cdot|_\infty$ and $|t|_\infty \ge
|\bfa|_\infty^{q^r-1}.$ But this is impossible in $\Fqs(t).$ Hence,
$|t \bfa|_v <  |\bfa|_v^{q^r}$ and so, $|\l_0|_v=|\bfa|_v^{q^r-q}$. 

There are two possibilities now.

{\bf Case 2a.} $\g^{q^r} - \g^q \ne 0$. 

Consequently, we have
$$|\Phi_t^{\l_0}(\bfb)|_v = \max\{|t \bfa|_v,  |\bfa|_v^{q^r}\} = |\bfa|_v^{q^r}.$$ 
It follows that
\begin{itemize}
\item $|t|_v^{1/(q^r -1)} \le |\bfa|_v$ ,
\item $|\l_0|_v^{1/(q^r - q)} = |\bfa|_v$  and
  \item $|\Phi_t^{\l_0}(\bfb)|_v = |\bfa|_v^{q^r} > |\bfa|_v =
    \max\{1, |t|_v^{1/(q^r - 1)}, |\l_0|_v^{1/(q^r - q)}\}.$  
  \end{itemize}
 Using  \eqref{definition N_v} again, we conclude that 
$\left|\Phi^{\l_0}_{t^n}(\bfb)\right|_ v\to \infty$ as $n\to\infty$
and $\bfb\not\in \Phi^{\l_0}_{\tor}$. 

{\bf Case 2b.} $\g^{q^r}=\g^q$.

Since we assumed that $\g\notin\Fq$, then we obtain that $r\ge 3$.
By~\eqref{eq: phi of b} and the assumption in {\bf Case 2b} we have that 
\begin{eqnarray*}
 \Phi^{\l_0}_t(\bfb) & = (\g-\g^q)t\bfa
\end{eqnarray*}
If $|t|_v>1$ (i.e., $v$ is the place $\infty$) then $|\Phi_t^{\l_0}(\bfb)|_v>|\bfa|_v$ and so, again \eqref{definition N_v} can be used to infer that $|\Phi_{t^n}^{\l_0}(\bfb)|_v\to\infty$ since 
$$|\Phi_t^{\l_0}(\bfb)|_v>|\bfa|_v= \max\left\{|t|_v^{1/(q^r-1)}, |\l_0|^{1/(q^r-q)}\right\}.$$

Assume now that $|t|_v\le 1$. Using the assumption on $\g$ from {\bf Case 2b} we get
\begin{align*}
 \Phi_{t^2}^{\l_0}(\bfb)
& = (\g-\g^q)t^2\bfa + (\g-\g^q)^qt^q\bfa^q\l_0 + (\g-\g^q)^{q^r}t^{q^r}\bfa^{q^r}\\
& = (\g-\g^q)t^2\bfa + \left(\g^q-\g^{q^2}\right)t^q \bfa^q\l_0 + \left(\g^q-\g^{q^2}\right) t^{q^r}\bfa^{q^r}\\
& = (\g-\g^q)t^2\bfa + \left(\g^q-\g^{q^2}\right)\cdot \left(t^q \bfa^q\l_0+ t^{q^r}\bfa^{q^r}\right)\\
& = (\g-\g^q)t^2\bfa + \left(\g^q-\g^{q^2}\right)\cdot \left(-t^{q+1}\bfa-t^q\bfa^{q^r}+t^{q^r} \bfa^{q^r}\right)\text{ using \eqref{eq: lambda0}}\\
& = (\g-\g^q)t^2\bfa + \left(\g^q-\g^{q^2}\right)\cdot \left(-t^{q+1}\bfa + \bfa^{q^r}\cdot \left(t^{q^r}-t^q\right)\right).
\end{align*}
Since $|\bfa|_v>1$ and $v\in\Omega_s$ we conclude that
\begin{equation}
\label{the valuation is not too large}
\left|t^{q^r}-t^q\right|_v=\left|t^{q^{r-1}}-t\right|_v^q \ge |\bfa|_v^{-q}
\end{equation}
since $t^{q^{r-1}}-t$ is a separable polynomial and thus it is either a $v$-adic unit or has the $v$-adic absolute value equal to that of a uniformizer of $v$ in $\Fqs(t)$ (note the assumption that $|t|_v\le 1$). So, using \eqref{the valuation is not too large} we get that
\begin{align*}
 \left|\bfa^{q^r}\cdot \left(t^{q^r}-t^q\right)\right|_v
& \ge |\bfa|_v^{q^r-q}\\
& \ge |\bfa|_v^{q^3-q}\text{ because $r\ge 3$ in {\bf Case 2b}}\\
& > |\bfa|_v\text{ because $q\ge 2$}\\
& \ge \left|t^{q+1}\bfa\right|_v\text{ because $|t|_v\le 1$ by our assumption.}
\end{align*}
Therefore (using also that $\g\notin\Fq$ and thus $\g^q-\g^{q^2}\ne 0$)
\begin{equation}
\label{valuation of the second part}
\left|\left(\g^q-\g^{q^2}\right)\cdot \left(-t^{q+1}\bfa + \bfa^{q^r}\cdot \left(t^{q^r}-t^q\right)\right)\right|_v\ge |\bfa|_v^{q^r-q}.
\end{equation}
On the other hand
\begin{equation}
\label{valuation of the first part}
\left|(\g-\g^2)t^2\bfa\right|_v\le |\bfa|_v.
\end{equation}
because $|t|_v\le 1$. Using \eqref{valuation of the second part} and \eqref{valuation of the first part} coupled with the fact that
$$q^r-q>1\text{ since $r\ge 3$ and $q\ge 2$}$$
we conclude that
$$\left|\Phi_{t^2}^{\l_0}(\bfb)\right|_v\ge |\bfa|_v^{q^r-q}>|\bfa|_v= \max\left\{|t|_v^{1/(q^r-1)}, |\l_0|^{1/(q^r-q)}\right\}.$$
Again using \eqref{definition N_v} yields that $|\Phi^{\l_0}_{t^n}(\bfb)|_v\to\infty$ as $n\to\infty$ and thus $\bfb\notin\Phi^{\l_0}_{\tor}$.

In conclusion, assuming that $\g\notin\Fq$ yields in each case a contradiction; this finishes the proof of Theorem~\ref{nice corollary 2}.
\end{proof}

%#######################################################################

%#######################################################################

%\bibliographystyle{plain}

\end{document}

%##################     The following mighe be useful in the future

The following is a (possible) generalization of Theorem~\ref{nice
  corollary 2}. 

\begin{thm}
\label{nice corollary 3}
Let $q$ be a power of a prime number $p$,
and let $K$ be a field  extension of $\Fq(\th)$ such that $\Fqs$ is the
algebraically closure of $\Fq$  in $K$. Let $\bfa,\bfb\in K$ and let $\Phi^\l:\Fp[t]\lra \End_{K(\l)}(\bG_a)$ be the family of Drinfeld modules given by
$$
\Phi^\l_t(x)=\th x+g(\l) x^q + x^{q^2}
$$
where $g(\l) \in K[\l]$  is a nonconstant polynomial in $\l.$ 
Then there exist infinitely many $\l\in\Kbar$ such that both $\bfa$ and $\bfb$ are torsion points for $\Phi^\l$, if and only if $\bfa$ and $\bfb$ are linearly dependent over $\Fq$.
\end{thm}

\begin{proof}[Proof of Theorem~\ref{nice corollary 3}.]
Arguing as in the proof of Theorem~\ref{nice corollary} we obtain that
if $\bfa$ and $\bfb$ are $\Fq$-linearly dependent, then there exist
infinitely many $\l\in\Kbar$ such that $\bfa,\bfb\in\Phi^\l_{\tor}$.

Assume now that there exist infinitely many $\l\in\Kbar$
such that $\bfa,\bfb\in\Phi_{\tor}^\l$. In addition, we may assume
both $\bfa$ and $\bfb$ are nonzero and  furthermore, (for the moment)
we assume that $\th \bfa + \bfa^r \ne 0$.    

Let $\l_0 \in \Kbar$ be any fixed element such that $\bfa \in
\Phi^{\l_0}[t]$ (the $t$-torsion of $\Phi^{\l_0}$).  Hence, we have $\Phi_t^{\l_0}(\bfa) = \th \bfa +
g(\l_0)\bfa^q + \bfa^{q^2} = 0,$ or equivalently
\begin{equation}
\label{eq: lambda0} g(\l_0) \bfa^q = - \th \bfa - \bfa^{q^2}. 
\end{equation}
As in the proof of Theorem~\ref{nice corollary 2}, we have $\g = \bfb/\bfa
\in \Fqbar. $ In addition, Theorem~\ref{main result} yields
that for each $\l\in\Kbar$, we have that
$\bfa\in\Phi^\l_{\tor}$ if and only if $\bfb\in\Phi^\l_{\tor}$.  In
particular, we find that $\bfb = \g \bfa \in \Phi^{\l_0}_{\tor}. $
Assume that $\g \not\in  \Fq.$ We will prove in several cases that
$\bfb\not\in \Phi_{\tor}^{\l_0}$ which yields a contradiction to
Theorem~\ref{main result} . Before we proceed, we note that 
\begin{align}
\Phi_t^{\l_0}(\bfb) & = \g \th \bfa + \g^q g(\l_0) \bfa^q+ \g^{q^2}
\bfa^{q^2} \notag \\
  & = (\g - \g^q) \th \bfa + (\g^{q^2} - \g^q) \bfa^{q^2}
  \;\text{by~\eqref{eq: lambda0}}
\label{eq: phi of b} .
  \end{align}

\noindent {\bf Case 1.} $\bfa\in\Fqs$.

Let $w\in\Omega_K$ be any place at which $\th$ is not integral,
i.e. $|\th|_w > 1.$ Then,~\eqref{eq: lambda0} yields that $|g(\l_0)|_w =
|\th|_w$  and  it follows from~\eqref{eq: phi of b} that
$$
|\Phi_t^{\l_0}(\bfb) |_w = |(\g - \g^q) \th \bfa + (\g^{q^2} - \g^q)
\bfa^{q^2} |_w = |\th|_w > \max\{1, |\th|_w^{1/(q^r - 1)},
|g(\l_0)|_w^{1/(q^r - q)}\}. 
$$
Using \eqref{definition N_v}, we conclude that
$\left|\Phi^{\l_0}_{t^n}(\bfb)\right|_w \to \infty$ and hence
$\bfb\not\in \Phi^{\l_0}_{\tor}$ as desired. 

\noindent {\bf Case 2.} $\bfa\notin\Fqs$.

In this case there exists a place $v\in\Omega_K$ such that
$|\bfa|_v>1$.  We fist consider the case where $\boxed{\g^{q^2} \ne
  \g^q}$  and $\boxed{|\Phi_t^{\l_0}(\bfb)|_v = \max\{|\th \bfa|_v,
  |\bfa|_v^{q^r}\}}$. There are several cases to consider:

\noindent {\bf (A).} $|\th \bfa|_v \le |\bfa|_v^{q^r}.$ 

We have
\begin{itemize}
\item $|\th|_v^{1/(q^r -1)} \le |\bfa|_v$ ,
\item $|g(\l_0)|_v^{1/(q^r - q)} \le |\bfa|_v$  and
  \item $|\Phi_t^{\l_0}(\bfb)|_v = |\bfa|_v^{q^r} > |\bfa|_v.$ 
  \end{itemize}
  Using  \eqref{definition N_v} again, we conclude that
$\left|\Phi^{\l_0}_{t^n}(\bfb)\right|_w \to \infty$ and $\bfb\not\in
\Phi^{\l_0}_{\tor}$. 

\noindent {\bf (B).} $|\th \bfa|_v >  |\bfa|_v^{q^r}.$ 

First we note that
\begin{align*}
  |\th|_v^{1/(q^r -1)} & > |\bfa|_v > 1  ,\\
  |g(\l_0)|_v & = \frac{|\th \bfa|_v}{|\bfa|_v^{q}}, \\
  |\Phi_t^{\l_0}(\l_0)(\bfb)|_v & = |\th \bfa|_v . 
  \end{align*}
  Now, let's consider
  \begin{align*}
 \frac{|g(\l_0)|_v^{1/(q^r -q)}}{|\th|_v^{1/(q^r -1)}} & =
 \frac{|\th|_v^{1/(q^r -q) - 1/(q^r -1)}}{|\bfa|_v^{(q-1)/(q^r-q)}} \\
   & =  \frac{|\th|_v^{(q-1)/[(q^r -q) (q^r -
       q)]}}{\bfa|_v^{(q-1)/(q^r-q)}} \\
   & = \left( \frac{|\th|_v^{1/(q^r
         -1)}}{|\bfa|_v}\right)^{(q-1)/(q^r-q)}  > 1 \;\text{since
     $|\th|_v^{1/(q^r -1)  }> |\bfa|_v$}. 
\end{align*}
Therefore, $|g(\l_0)|_v^{1/(q^r -q)} > |\th|_v^{1/(q^r -1)} > 1.$ Now,
$$
|\Phi_t^{\l_0}(\l_0)(\bfb)|_v  = |\th \bfa|_v  = |\bfa|_v^q
|g(\l_0)|_v > |g(\l_0)|_v > |g(\l_0)|_v^{1/(q^r -q)} .
$$
We also conclude that $\bfb\not\in \Phi_{\tor}^{\l_0}$ by
\eqref{definition N_v}.

Next, we need to consider  $\boxed{|\Phi_t^{\l_0}(\bfb)|_v <
  \max\{|\th \bfa|_v,   |\bfa|_v^{q^2}\}}$.
In this case, we necessarily have $|\th \bfa|_v = |\bfa|_v^{q^2}$ and
hence $|\th|_v = |\bfa|_v^{q^2 -1}$. In this case, we have:
\begin{align*}
  |\th|_v^{1/(q^2 -1)} & = |\bfa|_v \quad\text{and} \\
  |\l_0|_v & \le \max\{|\th \bfa^{1-q}|_v, |\bfa|_v^{q^2 - q}\} =
  |\bfa|_v^{q^2 - q}\; \text{and hence} \\
  |\l_0|_v^{1/(q^2 -q)} & \le |\bfa|_v
\end{align*}
By \eqref{definition N_v} again, we must have $|\Phi_t^{\l_0}(\bfb)|_v
\le |\bfa|_v.$ 

For any $\a \in \Fq$ and let $\th_\a = \th + \a$ then $\Phi_{t+\a}(x)
= \th_\a x + \l x^q + x^{q^2}.$ Let $\l_\a$ be such that $\bfa \in
\Phi^{\l_\a}[t+\a]$. Equivalently, $\Phi_{t+\a}^{\l_\a}(\bfa) = \th_\a
\bfa + \l_\a \bfa^q + \bfa^{q^2} = 0.$ We can apply the above
arguments to $\Phi_{t+\a}^{\l_\a}$ and assume that we have
$|\Phi_{t+\a}^{\l_\a}(\bfb)|_v \le |\bfa|_v$ for every $\a \in \Fq.$

Let's write $\th = \e_\th u_\th
a^{q^2 -1}$ where $\e_\th \in \Fqbar$ and $u_\th$ is a $v$-adic
1-unit. From the inequality $|\Phi_t^{\l_0}(\bfb)|_v < \max\{|\th
\bfa|_v,   |\bfa|_v^{q^2}\}$ and \eqref{eq: phi of b}, we conclude
that
\begin{align*}
(\g^{q^2} - \g^q) &= \e_\th (\g^q - \g) \quad \text{or equivalently}\\
\g^{q^2} - (\e_\th +1) \g^q + \e_\th \g & = 0
\end{align*}
\fixme{This equation is independent of $\a.$ ...} Let $\pi$ be a fixed
uniformizer of $K_v$ and let the expansion of $u_\th$ to be
$$ u_\th = 1 + \sum_{i\ge 1} \xi_i \pi^i .$$
As $|\Phi_t^{\l_0}(\bfb)|_v < \max\{|\th \bfa|_v,   |\bfa|_v^{q^2}\}$,
this forces the $\xi_i = 0$ for $1\le i \le q^2-2.$ \helpme{It does
  not seem this information is useful.}

\end{proof}